\newtheorem{thm}{Theorem}[section]
\newtheorem{lem}[thm]{Lemma}
\newtheorem{proposition}[thm]{Proposition}
\newtheorem{cor}[thm]{Corollary}
\theoremstyle{definition}
\newtheorem{definition}[thm]{Definition}
\newtheorem{remarque}[thm]{Remark}
\newtheorem{exemple}[thm]{Example}
\theoremstyle{remark}
\newtheorem*{rem}{Remark}
\newcounter{case}
\providecommand{\R}{\mathbb R}
\providecommand{\m}{\mathbb}
\providecommand{\xx}{\mathbf x}
\providecommand{\E}{\mathbb{E}}
\providecommand{\prob}{\mathsf{P}}
\providecommand{\PP}{\mathsf{P}}
\providecommand{\N}{\mathbb N}
\providecommand{\Z}{\mathbb Z}
\providecommand{\Id}{\mathrm {Id}}
\providecommand{d}{\mathrm{d}}
\newcommand{\De}{\Delta}
\newcommand{\la}{\lambda}
\newcommand{\ind}{\ensuremath{\mathds 1}}
\newcommand{\de}{\delta}
\newcommand{\be}{\beta}
\newcommand{\ep}{\varepsilon}
\newcommand{\al}{\alpha}
\newcommand{\limla}{\lim_{\lambda\to 0}}
\newcommand{\si}{\sigma}
\newcommand{\ga}{\gamma}
\newcommand{\Ga}{\Gamma}
\definecolor{darkblue}{rgb}{0,0,0.7} % darkblue color
\newcommand{\darkblue}{\color{darkblue}} % darkblue command
\newcommand{\defn}[1]{\emph{\darkblue #1}} % emphasis of a definition
\title{Constant payoff in zero-sum stochastic games}
\author{Olivier Catoni\thanks{CREST, CNRS, ENSAE, Palaiseau, France. olivier.catoni@ensae.fr}, Miquel Oliu-Barton\thanks{CEREMADE, Universit\'e Paris Dauphine, PSL Research Institute, Paris, France. miquel.oliu.barton@normalesup.org} and Bruno Ziliotto\thanks{CEREMADE, CNRS, Universit\'e Paris Dauphine, PSL Research Institute, Paris, France. ziliotto@math.cnrs.fr}}
\begin{document}

\maketitle

%\date{\today} %November 2019}
%\textcolor{blue}{Version of November 2019}

\abstract{In a zero-sum stochastic game, at each stage, two adversary players take decisions and receive a stage payoff determined by them and by a controlled random variable representing the state of nature. The total payoff is the normalized discounted sum of the stage payoffs.  In this paper we solve the ``constant payoff'' conjecture formulated by Sorin, Vigeral and Venel (2010): if both players use optimal strategies, then for any $\al>0$, the expected discounted payoff between stage 1 and stage $\al/\la$ tends to the limit discounted value of the game, as the discount rate $\la$ goes to $0$. %$\la$ tends to $0$.  any sufficiently small discount rate $\la$, . % and equal to the value of the game. % when the game is sufficiently long. %, who proved this result for Markov decision problems. dealt with the 1-Player case: 
%%%% Sys %%%% remove all this 
%Such a property is quite natural and we believe that it reinforces significantly the concept of optimal strategies in stochastic games. In addition, our proof brings together tools from different areas of Game Theory, Probability and Functional Analysis. The semi-algebraic approach of Bewley and Kohlberg (1976) implies the existence of optimal stationary  strategies which are Puiseux series. Under these strategies, the state process follows a Markov chain with rare transitions and Friedlin-Wentzell theory applies; to conclude we establish new variational inequalities for the limit occupation measures induced by the optimal strategies, inspired by the work of Davini, Fathi, Iturriaga and Zavidovique (2016) on the charaterization of limit solutions of discounted Hamilton-Jacobi equations through Mather measures. This last point is another significant illustration of the close connection between Hamilton-Jacobi theory and dynamic games, through weak KAM theory. This connection is standard for differential games, but it is much less common when it comes to discrete-time games}.
%%%% Sys %%%%
%\tableofcontents
}
\abstract{
Dans un jeu stochastique à somme nulle, à chaque étape, deux joueurs adversaires prennent des décisions et reçoivent un paiement d'étape déterminé par ces décisions, ainsi que par une variable aléatoire contrôlée qui représente l'état de la nature. Le paiement total est la somme escomptée et normalisée  des paiements d'étape. Dans cet article, nous résolvons la conjecture du ``paiement constant", formulée par Sorin, Vigeral et Venel (2010): si les deux joueurs jouent des stratégies optimales, alors pour tout $\al>0$, l'espérance du paiement escompté entre les étapes 1 et $\al/\la$ tend vers la limite de la valeur escomptée du jeu, lorsque le facteur d'escompte $\la$ tend vers $0$.
}
\section{Introduction}

Stochastic games were introduced by Shapley \cite{shapley53} in order to model a repeated interaction between two opponent players in a changing environment.  At each stage $m\in \N$ of the game, players play a zero-sum game that depends on a state variable. Formally, knowing the current state $k_m$, Player 1 chooses an action $i_m$ and Player 2 chooses an action $j_m$. Their choices occur independently and simultaneously and have two consequences: first, they produce a stage payoff $g_m:=g(k_m,i_m,j_m)$ and second, 
%% sys 
%together with the current state $k_m$
they determine the law $q(k_m,i_m,j_m)$ of the next period's state $k_{m+1}$. Thus, the sequence of states follows a Markov chain controlled by the actions of both players.
To any discount rate $\lambda \in (0,1]$ and any initial state $k$ corresponds a $\lambda$-discounted game, denoted by $\Ga_\la(k)$,  in which Player 1 maximizes %the following  $\la$-discounted sum of stage payoffs
the expectation of $\sum_{m \geq 1} \lambda(1-\lambda)^{m-1} g_m$ given that $k_1=k$, while Player 2 minimizes this same amount. 
%The discount factor $\la$ has several interpretations, the two main ones being $(a)$ the impatience of the players (a payment today is more valuable than the same payment tomorrow) and $(b)$ the probability that the interaction stops at each stage. %, for some exogenous reason, the interaction stops after any stage. %t any given state. %According to the later, for any discount factor $\la\in(0,1)$ there is a unique stage $m$ such that the probability that the interaction is still going on is below $1/2$there is a
 A crucial aspect in this model is that \emph{the current state is commonly observed} by the players at every stage. Another one is stationarity: the transition function and stage payoff function do not change over time. 
 We assume like in Shapley's seminal work, that the set of states and the action sets for both players are \emph{finite}.

%
%Like in Shapley's seminal work, We assume throughout this paper that the set of states and the action sets for both players are \emph{finite}. % \cite{shapley53},
% \\
% As in the seminal paper from Shapley \cite{shapley53},
  % SYS 
  %Together, these assumptions yield a dynamic programming principle for the value
  %  , as first noted by Shapley (also known as Shapley's equation). 
%we assume throughout this paper that the set of states and the action sets for both players are \emph{finite}, 
 % Under these assumptions, 
 % (see 
 Shapley \cite{shapley53} proved that for any initial state $k$ and any discount rate $\la$, the $\la$-discounted stochastic game
has a value $v_\la(k)$, which is the unique fixed point of a contracting map. %, the so-called dis  the existence of the valu 
  %The value is denoted by $v^k_\la$ in the sequel. 
 Furthermore, both players have optimal strategies that depend on the past only through the current state.  %As in any zero-sum game, the value represents Player 1's payoff when both players are rational, i.e. use optimal strategies.\\
% , the game has a \textit{value} $v_{\lambda}(k) \in \m{R}$ and 
%\\
 A wide area of research is to investigate the properties of the discounted game, when $\lambda$ tends to 0. Intuitively, this corresponds to a game played either between very patient players, or between players who are very likely to interact a great number of times. Building on Shapley's results, Bewley and Kohlberg \cite{BK76} proved that $v_\la(k)$ converges as $\lambda$ tends to 0.  An alternative proof of this result was recently obtained by Oliu-Barton \cite{OB14}, using probabilistic and linear programming techniques. Mertens and Neyman \cite{MN81} proved then the existence of the so-called \textit{uniform value} $v^*(k)$, and its equality with $\limla v_\la(k)$. A new characterization for $v_\la(k)$, and a formula for $v^*(k)$ were recently obtained by Attia and Oliu-Barton \cite{AOB18}. Efficient algorithms to compute these values were then deduced by  %which one obtains efficient algorithm to compute $v_\la(k)$ for small $\la$, and $v^*(k)$ was obtained by 
 Oliu-Barton \cite{OB20}. %These results 
The finiteness of the state space plays a crucial role in these results, as highlighted by the counterexamples of Vigeral \cite{vigeral12} and Ziliotto \cite{ziliotto13} who considered, respectively, the case of compact action sets and compact state space. 
 
%Back to the finite model, where the limit value $v^*$ is known to exist, remains the major open question of its characterization. Up to date, very little is known about $v^*$, apart from very specific classes of games (see, for instance, the formula provided by Laraki \cite{laraki10} for absorbing games, i.e. stochastic games in which all states except one are absorbing). 
A remarkable property, referred to as the \emph{constant payoff property} was proved by Sorin, Venel and Vigeral \cite{SVV10} in the framework of single decision-maker problems:  
%We distinguish the weak and the strong constant payoff property.  
%It consists %in the inFormally, the former consists in for each $\la$ small enough, the 
for any sufficiently small $\lambda$ there exists an optimal strategy so that % for any initial state $k$, %
%This property is called \textit{constant-payoff property}.
%Starting from state $k_1$, not only should a rational and patient decision-maker expect to obtain approximately $v^*(k_1)$ at the end of the game, but his payoff should grow linearly over time. That is,  
the expectation of the cumulated payoff  $\sum_{m=1}^M \lambda(1-\lambda)^{m-1} g_m$ given $k_1=k$ is approximately equal to $(\sum_{m=1}^M\lambda(1-\lambda)^{m-1})v^*(k)$. %_\la^k$, where $v_\la^k$ denotes the value of the $\la$-discounted game with initial state $k$. 
%*(k)$, where $v^* \in \R^K$ denotes the vector of limit values. % of the game. 
%The strong constant payoff property consists of the property to be satisfied for any such couple. 
Note that the positive weights $\la(1-\la)^{m-1}$ add up to $1$, so that $\sum_{m=1}^M \lambda(1-\lambda)^{m-1}$ represents the fraction of the game that has already been played at stage $M$. %Tit is proven that  property is Sorin, Venel and Vigeral \cite{SVV10} %Lehrer and Sorin \cite{LS93} 
%proved that Markov decision problems satisfy this property More precisely, 
The constant payoff property holds as soon as the discounted value converges as the discount rate goes to $0$, and that the convergence is uniform in the state space. Further, it was conjectured that under similar conditions the constant payoff property should hold for any class of two-player zero-sum stochastic games that satisfy the same assumptions. As the discounted value of finite stochastic games converges, and the convergence is uniform (by finiteness), the conjecture directly applies to this class of games. %That the 
%in particular for .  (. The conjecture directly applies to the class of games 
%is thus directly 
%As finite stochastic games satisfy this assumption, the conjecture concerned directly this class of games. \\

The constant payoff property is not straightforward. Lehrer and Sorin \cite{LS93} provided a simple example of a Markov decision problem over a countable set of states where this property fails: when the decision-maker plays an optimal strategy, he gets a high payoff during the first half of the game, and a low payoff during the second half.  %More precisely, the authors imagined the existence of a pair of optimal strategies (or nearly optimal) such that for any initial state $k$ and any $t$, the cumulated payoff up to a fraction $t$ of the game should be approximatively $tv^*(k)$. 
However, it was known to hold for finite \emph{absorbing games}, a subset of stochastic games in which all states except one are absorbing. 
%For these games, %, and for some games with incomplete information. 
%Sorin and Vigeral \cite{SV18} proved that the constant payoff property holds. % in the following stronger sense: any pair of asymptotically optimal strategies induces a constant payoff during the game. %The property holds constant payoff property 
%s  for such games in the finite case.
 Beyond the finite framework, Sorin and Vigeral \cite{SV18} established the constant payoff property for absorbing games with compact action sets and jointly continuous payoff and transition functions. Oliu-Barton \cite{OB17} established the same property for the splitting game, a stochastic game with compact action sets and jointly continuous payoff and transition functions introduced by Sorin \cite{sorin02} to capture the information transmission in repeated games with incomplete information. % on both sides. 
 Let us note that, in spite of their differences, absorbing games and games with incomplete information have in common that the dynamics of the game has an irreversible property, which is not present in stochastic games.\\ % In this respect, these two models \\ % : revelation of information in the former, and absorption in the latter. \\

The main contribution of this paper is to establish that finite stochastic games have the constant payoff property, and thus to solve the conjecture in Sorin, Venel and Vigeral \cite{SVV10}. Moreover, a property more general than the conjecture is established (\textit{strong constant payoff property}): 
for any sufficiently small $\lambda$, \textit{for any pair of optimal strategies}, 
the expectation of the cumulated payoff  $\sum_{m=1}^M \lambda(1-\lambda)^{m-1} g_m$ given $k_1=k$ is approximately equal to $(\sum_{m=1}^M\lambda(1-\lambda)^{m-1})v^*(k)$.
\\
The proof relies heavily on the semi-algebraic approach developed by Bewley and Kohlberg \cite{BK76}, namely that the value function $v_\lambda(k)$ and a family of optimal stationary strategies $(x^1_\lambda,x^2_\lambda)$ admit a Puiseux series expansion in a neighborhood of $0$ (\textit{optimal Puiseux strategy profiles}). It is decomposed in two parts. First, we establish that the constant payoff property holds for optimal Puiseux strategy profiles (\textit{weak constant payoff property}). This readily proves the constant payoff conjecture for finite stochastic games. In a second part, we generalize this property to any family of optimal strategies, to obtain the strong constant payoff property. 

 The remainder of the paper is divided as follows. Section \ref{main} presents the model and main result. Section \ref{section_WP} is devoted to the proof of the weak constant payoff property. Section \ref{section_SP} proves the strong constant payoff property.  Section \ref{section_annexe} gives some examples and remarks. %some remarks and exrerema The appendix gives some insight on Markov chains with rare transitions and other remarks. 

\section{Model and main results} \label{main}
\subsection{Stochastic games}
We consider throughout this paper a standard two-player zero-sum stochastic game, as introduced by Shapley \cite{shapley53}. Such games are described by a 5-tuple $\Ga=(K,I,J,g,q)$, where $K$ is the set of states, $I$ and $J$ are the action sets of Player 1 and 2 respectively, $g:K\times I\times J\to \R$ is the payoff function and $q:K\times I\times J\to \De(K)$ is the transition function, where for each finite or countable set $X$, we denote by $\De(X)$ the set of probability distributions over $X$. \textbf{We assume that $K$, $I$ and $J$ are finite sets}.  

%\paragraph{Notation.} $\N$ denotes the set of positive integers and $\De(. %=\{1and set $\|g\|:=\max_{(k,i,j)}|g(k,i,j)|$. % and, for each finite set $X$, we denote by $\De(X)$ the set of probabilities over $X$.

\paragraph{Outline of the game.}
The game $\Ga$ proceeds as follows:  at every stage $m\geq 1$, knowing the current state $k_m$, the players choose actions $i_m$ and $j_m$ independently; Player 1 receives the stage payoff $g(k_m,i_m,j_m)$, and Player 2 receives $-g(k_m,i_m,j_m)$. A new state $k_{m+1}$ is drawn according to the probability $q(\cdot\,|\, k_m, i_m,j_m)$. The players observe the pair of actions $(i_m,j_m)$ and the new state $k_{m+1}$, and the game proceeds to stage $m+1$. %Thus, Player 1 aims at maximizing the above quantity, while Player 2 aims at minimizing it. 
% Its value exists and is denoted by $v_\la(k)$. In order to state our main result, let us start by defining 
%The observation of the past actions is known as \emph{perfect monitoring}. An extension to a more general model in which the players receive signals is described in Section \ref{stoch_signals}.

%%%%% THETA THETA  %%%%%
%\paragraph{Weighted-average stochastic games.} For any $\theta\in \De(\N)$, 
%we denote by $\Ga_\theta(k)$ the stochastic game $\Ga$ with initial state $k$ and where Player 1 maximizes, in expectation, the $\theta$-weighted average of rewards 
%\[
%\sum\nolimits_{m\geq 1}\theta_m g(k_m,i_m,j_m),
%\]
%while Player 2 minimizes this amount. 
%%%%% THETA THETA  %%%%%

\paragraph{Discounted stochastic games.} For any discount rate $\la \in(0,1]$, we denote by $\Ga_\la(k)$ the stochastic game  $\Ga$ with initial state $k$ and where Player 1 maximizes, in expectation, the normalized $\la$-discounted sum of stage payoffs 
\[
\sum\nolimits_{m\geq 1}\la(1-\la)^{m-1} g(k_m,i_m,j_m),
\]
while Player 2 minimizes this amount. 
%Catoni
More precisely, we consider the 
case where the strategies of the two players form the saddle point 
of a $\min$-$\max$ problem, as explained below. 
%where $\theta_m=\la(1-\la)^{m-1}$ for all $m\geq 1$. Discounted stochastic games are prominent in the literature, and will be the central object of the present paper. %The sequence of weights $(\la(1-\la)^{m-1})_{m\geq 1}$ will be referred to as the $\la$-discounted weights. 

%iscount rate $\la \in(0,1]$, we denote by $\Ga_\la(k)$ the stochastic game $\Ga$ with initial state $k$ and where Player 1 maximizes, in expectation, the normalized $\la$-discounted sum of rewards 
%\[
%\sum\nolimits_{m\geq 1}\la(1-\la)^{m-1} g(k_m,i_m,j_m),
%\]
%while Player 2 minimizes this amount.

\paragraph{Strategies.}
The sequence $(k_1,i_1,j_1,...,k_m,i_m,j_m,...)$ generated along the game is called \textit{a play}. The set of plays is $(K\times I\times J)^{\N}$.  
%\begin{definition}\label{strat}
\begin{enumerate}
%\item[]
\item[$(i)$]
A \defn{strategy} for a player specifies a mixed action to each possible set of past observations: formally, a strategy for Player 1 is a collection of maps $\sigma^1=(\sigma^1)_{m \geq 1}$, where $\sigma^1_m:(K \times I \times J)^{m-1} \times K \rightarrow \Delta(I)$. Similarly, a strategy for Player 2 is a collection of maps $\sigma^2=(\sigma^2)_{m \geq 1}$, where $\sigma^2_m:(K \times I \times J)^{m-1} \times K \rightarrow \Delta(J)$. 
\item[$(ii)$]
A \defn{stationary strategy} plays according to the current state only. Formally, a stationary strategy for Player 1 is a mapping $x^1:K\to \De(I)$. Similarly, a stationary strategy for Player $2$ is a mapping $x^2:K\to \De(J)$. 
\item[$(iii)$] A \defn{strategy profile} is a pair of strategies $(\si^1,\si^2)$.
\end{enumerate}
% \end{definition}
\noindent \textbf{Notation.} The sets of strategies for Player 1 and 2 are denoted by $\Sigma^1$ and $\Sigma^2$, respectively, and the sets of stationary strategies by $\De(I)^K$ and $\De(J)^K$.\\ %, respectively. \\ %, will be considered as subsets of $\R^{K\times I}$ and $\R^{K\times J}$ respectively. \\

% (resp., $x^2:K\to \De(J)$). The set 

 %One crucial aspect of stochastic games is that the past histories contains the current state. 
  %As already noticed by Shapley \cite{shapley53}, this property yields the existence of the value $v_\la(k)$ and of optimal stationary strategies $x_\la:K\to \De(I)$ and $y_\la:K\to \De(J)$. \\ 
%  \textcolor{red}{On devrait dire quelque chose sur le perfect monitoring non ? Cela me gêne de définir les stratégies comme cela, d'autant plus que la structure d'information ne joue aucun rôle...}\\
We denote by $\PP^{k}_{\si^1,\si^2}$ the unique probability measure on the set of plays $(K\times I\times J)^\N$ such that, for any finite play $h^n=(k_1,i_1,j_1,\dots,k_{n-1},i_{n-1},j_{n-1},k_n)$ one has
\begin{eqnarray*}\prob^k_{\si^1,\si^2}(h_n) =\prod_{m=1}^{n-1} \si^1_m[h^n_m](i_m) \si^2_m[h^n_m](j_m)q(k_{m+1}|k_m,i_m,j_m) \, .\end{eqnarray*} %\times \dots\times  
% \si^1_{m-1}(k_{m-1})(i_{m-1}) \si^2_{m-1}(k_{m-1})(j_{m-1})q(k_m|k_{m-1},i_{m-1},j_{m-1})
 where $h^n_m$ is the restriction of $h^n$ to the first $m$ stages, i.e. $h^n_1:=k_1$ and for all $2\leq m \leq n$: 
 $$ h^n_m:=(k_1,i_1,j_1,\dots, k_{m-1},i_{m-1},j_{m-1},k_m)\, .$$
The extension to infinite plays follows from the Kolmogorov extension theorem. Thus, $\PP^{k}_{\si^1,\si^2}$ is the unique probability measure on plays induced by the pair $(\si^1,\si^2)$ in the stochastic game starting from state $k$ (note that the dependence on the transition function $q$ is omitted). The expectation with respect to the probability $\PP^k_{\si^1,\si^2}$ is denoted by $\E^k_{\si^1,\si^2}$. % are is denoted by  denote by $\E^k_{\si^1,\si^2}$, and the expected payoff by:
% Formally, 
For any $\la\in (0,1]$ and any $k\in K$, 
% sequence $\theta=(\theta_m)_{m\geq 1}$ of non-negative weights and any initial state $k\in K$ 
we denote by $\ga_\la(k,\, \cdot\,,\, \cdot\,): \Sigma^1\times \Sigma^2\to \R$ the payoff function corresponding to the game $\Ga_\la(k)$: 
% and $\la\in(0,1]$, and any strategy profile $(\si^1,\si^2)$ of $\Ga_\la(k)$, we define the \defn{cumulated payoff up to time $t\in [0,1]$} as follows:
 %for any time $t\in [0,1]$. 
%\begin{definition}% [The cumulated payoff at time $t$] 
%For any $k\in K$, $t\in [0,1]$ and $\la\in(0,1)$ and any strategy profile  $(\si^1,\si^2)$ let: 
\begin{equation}\label{expected_payoff}
\ga_\la(k,\si^1,\si^2):=\E_{\si^1,\si^2}^{k}\left[\sum\nolimits_{m\geq 1}\la(1-\la)^{m-1} g(k_m,i_m,j_m)\right].\end{equation}
%and $\PP^k_{\si^1,\si^2}$ is 
%We use the notation $\ga_\la(k,\si^1,\si^2)$ for the $\la$-discounted game $\Ga_\la(k)$.  %The particular case 
 \paragraph{Shapley's results.} For any discount rate $\la\in (0,1]$ and any initial state $k\in K$, Shapley \cite{shapley53} proved that the discounted stochastic game $\Ga_\la(k)$ has a value, so that the following equalities hold: %that is: % $v_\lambda(k)$ : % defined as follows:
\begin{equation}\label{valSG}
v_{\lambda}(k)=\max_{\sigma^1 \in \Sigma^1} \min_{\sigma^2 \in \Sigma^2} \gamma_{\lambda}(k,\sigma^1,\sigma^2)=\min_{\sigma^2 \in \Sigma^2} \max_{\sigma^1 \in \Sigma^1} \gamma_{\lambda}(k,\sigma^1,\sigma^2)\,.
\end{equation}
%where for any pair of strategies $(\si^1,\si^2)$ we set % set %, the stochastic game $\Ga=(K,I,J,g,q)$ where Player 1 
% denote by $\Ga_\la(k)$ the stochat and 
 %Player $1$ 
% maximizes the expectation of the $\la$-discounted average of the stage payoffs, that is: %, i.e. the expectation of: %the expectation of %, together with $(i_m,j_m)$. 
%The payoff to Player 1 is the $\la$-discounted average of the stage-payoffs, i.e. 
%\ga_\la(k,\si^1,\si^2):=\E_{\si^1,\si^2}^{k}\left[\sum\nolimits_{m\ge 1} \la (1-\la)^{m-1}g(k_m,i_m,j_m)\right].\end{equation}
%\[\sum_{m\geq 1} \la(1-\la)^{m-1} g(k_m,i_m,j_m),\]
%while Player 2 minimizes the same amount. We denote this game by  $\Ga_\la(k)$. 
%Shapley \cite{shapley53} proved that $\Ga_\la(k)$ has a value $v_\la(k)\in \R$, 
Furthermore, both players have optimal stationary strategies in $\Ga_\la$, where a strategy $\sigma^1 \in \Sigma^1$ is optimal for Player 1 if for any $k \in K$, it realizes the maximum in the left-hand side of \eqref{valSG}, and a strategy $\sigma^2 \in \Sigma^2$ is optimal for Player 2 if for any $k \in K$, it realizes the minimum in the right-hand side of  \eqref{valSG}.\\

\noindent \textbf{Notation.} The set of optimal strategies for Player 1 and 2 in the game $\Ga_\la$ are denoted by $\Sigma^1_\la$ and  $\Sigma^2_\la$, respectively. %The set of optimal stationary strategies are denoted by $X^1_\la$ and $X^2_\la$. %   The  %,  where the dependence on the initial state is omitted because there exist stationary strategies that are optimal $\Ga_\la(k)$, for any initial state $k$. 
%the set of optimal strategies are denoted by $\Sigma^1_\la$ and  $\Sigma^2_\la$. % and $\Sigma^2_\la$. %, respectively. %The dependence on the initial state will me the set of optimal stationary strategies and the set of (general) optimal strategies, for player $i$ in the game $\Ga_\la$. The initial state  
% and $X^2_\la$, respectively.  
%optimal strategies for Player 1 (resp., 2) is denoted by $\Sigma^1_\la$ (resp., $\Sigma^2_\la$).

\subsubsection{Puiseux strategies} 
A map $f:(a,b) \to \R$ is a \defn{Puiseux series} on $(a_0,b_0)\subset (a,b)$ if there exists $m_0\in \Z$, $N\in \N$ and a real sequence $(c_m)_{m\geq 0}$ so that 
$$f(\la)=\sum_{m\geq m_0} c_m \la^{m/N}\qquad \forall \la\in (a_0,b_0).$$
% $f$ is bounded Note that $f$ In particular, for any bounded Puiseux series 
A function $f:(0,1] \to \R$ admits a Puiseux expansion at $0$ if there exists $\la_0$ so that $f$ is a Puiseux series on $(0,\la_0)$. Clearly, if $f$ is bounded then one can take $m_0=0$. % Bouned In both case
%$$f(z)=\sum_{m\geq m_0} a_m z^{m/N}\qquad \forall \lambda \in (0,\lambda_0)\, .$$

\begin{definition} A \defn{Puiseux strategy profile} \label{def_Puiseux} is a family of stationary strategy profiles $(x^1_\la,x^2_\la)_{\la\in(0,1]}$ so that, for some $\la_0 \in (0,1]$, the mappings $\la\mapsto x^1_\la(k,i)$ and $\la\mapsto x^2_\la(k,j)$ are bounded real Puiseux series on $(0,\la_0)$, for all $(k,i,j)\in K\times I\times J$.
\end{definition}
\begin{definition} An \defn{optimal Puiseux strategy profile} is a Puiseux strategy profile $(x^1_\la,x^2_\la)_{\la\in(0,1]}$ so that for all $\la\in(0,\la_0]$ and $k \in K$, the stationary strategies $x^1_\la$ and $x^2_\la$ are optimal in $\Gamma_\la$. %\in $ is an optimal stationary for Player 1 in  and $x^2_\la$ is optimal for Player 2, 
\end{definition}
%\begin{definition} A \defn{Puiseux family of Markov chains} is a family of Markov chains $(Q_\la)_{\la\in (0,1]}$ so that, for some $\la_0>0$, the mapping $\la\mapsto Q_\la(k,\ell)$ is a real Puiseux series on $(0,\la_0)$, for all $(k,\ell)\in K^2$.
%\end{definition}
%To any Puiseux strategy profile $(x^1_\la,x^2_\la)_{\la\in(0,1]}$ and each $\la\in(0,1]$ corresponds a stochastic matrix $Q_\la\in \R^{K\times K}$ and a payoff vector $g_\la\in \R^K$ defined as follows: %. That is, for each $(k,\ell)\in K^2$ we set %, the following maps are Puiseux series on $(0,\la_0)$:
%\begin{eqnarray*} Q_\la(k,\ell)&:=&\sum_{(i,j)\in I\times J} x_\la^1(k,i)x_\la^2(k,j)q(\ell\,|\,k,i,j)\qquad \forall (k,\ell)\in K^2,\\ 
%g_\la(k)&:=&\sum_{(i,j)\in I\times J} x_\la^1(k,i)x_\la^2(k,j)g(k,i,j)\qquad \forall k\in K\,.
%\end{eqnarray*} %\qquad \forall (k,\ell)\in K^2\,.$$
%Note that $\la\mapsto Q_\la(k,\ell)$ and $\la\mapsto g_\la(k)$ are also Puiseux series on $(0,\la_0)$ for all $(k,\ell)\in K^2$.

 \subsubsection{The semi-algebraic approach} \label{semi_alg}
Fix $\la\in(0,1]$ and let $v_\la\in \R^K$ be the vector of values. %Let $(x^1_\la,x^2_\la)\in X_\la^1\times X^2_\la$ be a pair of optimal stationary strategies. 
Building on Shapley's results \cite{shapley53}, Bewley and Kohlberg \cite{BK76} defined a subset $S\subset \R \times \R^K\times \R^{K\times I}\times \R^{K\times J}$ by setting 
$$(\la, v, x^1,x^2)\in S\ \Longleftrightarrow \ \begin{cases} \la\in \R  \text{ is a discount rate}\\
v\in \R^K \text{ is the vector of values of } \Ga_\la\\
(x^1, x^2)\in \R^{K\times I}\times \R^{K\times J} \text{ is a pair of optimal stationary strategies in } \Gamma_\la\, .\end{cases}$$

%$\R$ is some discount factor, that is $0<\la\leq 1$,
% the following relations hold:
%\begin{itemize}
%\item $\la\in \R$ is some discount factor, that is $0<\la\leq 1$,
%\item $v\in \R^K$ is the vector of values of the $\la$-discounted stochastic game $\Ga_\la$,
%\item $(x^1, x^2)\in \R^{K\times I}\times \R^{K\times J}$ is a pair of optimal stationary strategies in $\Gamma_\la$.
%\end{itemize}
%The set $S$ is semi-algebraic, as it can be described by the following finite set of polynomial equalities and inequalities: \begin{eqnarray*}
The set $S$ is semi-algebraic, as it can be described by the following finite set of polynomial equalities and inequalities: \begin{eqnarray*}
0< \la &\leq & 1\\
\forall (k,i), \ x^1(k,i)\geq 0,\ \text{and } \ \forall k, \quad \sum\nolimits_{i\in I} x^1(k,i)&=&1\\
\forall (k,j), \ x^2(k,j)\geq 0,\ \text{and } \ \forall k, \quad \sum\nolimits_{j\in J} x^2(k,j)&=&1\\
\forall (k,j),\quad \sum\nolimits_{i\in I} x^1(k,i)\left(\la g(k,i,j)+(1-\la)\sum\nolimits_{\ell \in K}q(\ell|k,i,j)v(\ell)\right) & \geq & v(k)\\
\forall (k,i),\quad \sum\nolimits_{j \in J} x^2(k,j)\left(\la g(k,i,j)+(1-\la)\sum\nolimits_{\ell \in K}q(\ell | k,i,j)v(\ell)\right) & \leq & v(k)\,.
\end{eqnarray*}
By the Tarski-Seidenberg elimination theorem, the functions $\la\mapsto v_\la(k)$ are real, semi-algebraic functions, for each initial state $k\in K$. Similarly, there exist a selection of optimal strategies $(x^1_\la,x^2_\la)$ such that the maps $\la\mapsto x^1_\la(k,i)$ and $\la\mapsto x^2_\la(k,j)$ are real semi-algebraic functions as well, for all $(k,i,j)$. By the Puiseux theorem, any real semi-algebraic function $f:(0,1]\to \R$ admits a Puiseux expansion in some neighborhood of $0$. Hence,%In particular, if $f$ is bounded (as is the case for the previously mentioned functions), there %As all the previous functions are bounded, this means that for 
%exists some $\lambda_0 \in (0,1]$, $N\in \N$ and a real sequence $(a_m)_{m\geq 0}$ so that 
%$$f(\lambda)=\sum_{m\geq 0} a_m\la^{m/N}\qquad \forall \lambda \in (0,\lambda_0)\, .$$
%.se series.  %in the Puiseux expansions corresponding to $v_\la(k)$, $x^1_\la(k,i)$ and $x^2_\la(k,j)$. By finiteness, one can choose $N$ and $\la_0$ to be common to all these series\footnote{It is enough to take the least common multiple of all $N$'s, and the minimum of all $\la_0$'s.}. % neighbthe intervals $(0,\la_0)$ . % of $k$, $i$ and $j$. 
%n optimal Puiseux strategy profile, that is, a family of optimal stationary strategies which are also a Puiseux strategy profile. 
\begin{itemize}
\item For each $k\in K$, the map $\la\mapsto v_\la(k)$ admits a Puiseux expansion at $0$, so that the limit $v^*(k):=\lim_{\la\to 0}v_\la(k)$ exists. 
\item There exists an optimal Puiseux strategy profile $(x^1_\la,x^2_\la)_{\la\in(0,1]}$. %which induces a Puiseux family of Markov chains $(Q_\la)_{\la\in(0,1]}$.
\end{itemize}
%\item There exists an optimal Puiseux strategy profile},
 %, defined as follows. % (see Definition right below). %a family of optimal stationary strategies $(x^1_\la,x^2_\la)_\la$ having the following property. %that admit Puiseux series. 

%The limit and  the limit is denoted by %. The limit will be denoted by 
%$v^*\in \R^K$. %, where $v^*(k):=\limla v_\la^k$ for all $k\in K$. 
%The second is the existence of

%\noindent \textbf{N.B}. For any Puiseux strategy profile $(x^1_\la,x^2_\la)_{\la\in (0,1]}$ one can assume without loss of generality (i.e. considering the lower common multiple of all denominators), that the exponents of all Puiseux series have a common denominator in $\N$. 

%\begin{definition} 
%\end{definition}

% with  common\footnote{By finiteness, one can assume without loss of generality that $N$ and $\la_0$ are common in the expansions of $x^1_\la(k,i)$ and $x^2_\la(k,j)$, for all $(k,i,j)\in K\times I\times J$.} denominator $N$, 
%$v_\la(k)$, 

%Such strategies will be called \emph{Puiseux strategies}. 
%Moreover, the set of cycles can be represented as a valued tree which contains the . 

%Until very recently, the semi-algebraic approach was the only way of establishing these results. An alternative approach was proposed by Oliu Barton \cite{OB14}.

\subsubsection{The game on $[0,1]$}\label{clock}
Let $\la\in(0,1]$. % be a sequence of non-negative weights satisfying $\sum_{m\geq 1}\theta_m=1$. (This is the case, for instance, when there exists some discount rate $\la\in(0,1]$ so that $\theta_m=\la(1-\la)^{m-1}$ for all $m\geq 1$).  
For any $M\in \N$, define the following map:
 \[\eta(\la,\,\cdot\,):\N\to [0,1],\qquad 
\eta(\la,M):=\sum_{m=1}^M \la(1-\la)^{m-1}\,.\] %  =1-(1-\la)^{M}, \quad \forall M\in\N\,.\]
It can be interpreted as a {clock} that indicates the \emph{fraction of the game} that has already been played after any given number of stages.  
%corresponding to the different stages of t$1,\dots,M$ in the stochastic game with $\theta$-weighted average. In this sense, $\eta(\theta,\,\cdot\,)$ is a \emph{clock} which indicates  
Conversely, to any fraction of the game $t\in[0,1]$ corresponds a stage where the sum of weights of the previous stages is approximately equal to $t$. Formally, we introduce the \emph{inverse-clock} map by 
\[\varphi(\la,\,\cdot\,):[0,1]\to \N \cup \left\{+\infty\right\}, \qquad % defined as: 
\varphi(\la,t):=\inf \{M\geq 1,\ \eta(\la,M)\geq t\}  \ =\left\lceil\frac{\ln(1-t)}{\ln(1-\lambda)}\right\rceil   \, , \]  
where $\lceil x \rceil$ denotes the upper integer part of $x$.
%,\quad \forall t\in[0,1).\] %\quad\text{ and } \quad  \varphi(\la,1):=+\infty\, .
The notion of clock and inverse-clock, which are now standard, were initiated by Sorin \cite{sorin03}, and allow to consider the discrete-time game $\Gamma_\la(k)$ as a game played on the time interval $[0,1]$. 
\paragraph{Cumulated payoffs.} For any fraction $t\in [0,1]$ we extend the definition of the payoff function to the map $\ga_\la(k,\, \cdot\,,\, \cdot\,,\, \cdot\,):  \Sigma^1\times \Sigma^2 \times [0,1] \to \R$ by setting % using the inverse-clock by 
%llows: 
% \begin{definition}For any $k\in K$ and $\theta=(\theta_m)_{m\geq 1}$, and any strategy profile $(\si^1,\si^2)$ of $\Ga_\theta(k)$, we define the \defn{cumulated payoff up to time $t\in [0,1]$} as follows:
 %for any time $t\in [0,1]$. 
%\begin{definition}% [The cumulated payoff at time $t$] 
%For any $k\in K$, $t\in [0,1]$ and $\la\in(0,1)$ and any strategy profile  $(\si^1,\si^2)$ let: 
\begin{eqnarray}\label{cumul_t}
\ga_\la(k,\si^1,\si^2;t)&:=&\E_{\si^1,\si^2}^{k}\left[\sum\nolimits_{m= 1}^{\varphi(\la,t)} \la(1-\la)^{m-1} g(k_m,i_m,j_m)\right].
%P_\la(k,\si^1,\si^2; t)&:=&\PP_{\si^1,\si^2}^{k}(k_{\varphi(\la,t)}=\,\cdot\,)
%\emph{distribution over the states} and 
\end{eqnarray}
For convenience, for any pair of strategies $(\si^1,\si^2)$ we set $\ga_\la(\si^1,\si^2;t)\in \R^K$ to be the vector of payoffs $\ga_\la(k,\si^1,\si^2;t)$, $k\in K$. Note also that $\ga_\la(\si^1,\si^2)=\ga_\la(\si^1,\si^2;1)$ by definition. 

%Again, we use the notation $\ga_\la$ for the $\la$-discounted evaluation. %game $\Ga_\la(k)$. 
%\end{definition}
%\noindent Note that this definition extends \eqref{expected_payoff} since $\ga_\theta(k,\si^1,\si^2;1)=\ga_\theta(k,\si^1,\si^2)$ for all $k,\si^1,\si^2$ and $\theta$. 
% introduced in \eqref{expected_payoff} corresponds to the cumulated payoff at the terminal time $t=1$, i.e. at infinity. 

\subsection{Main result} 
Our main result is a precise characterisation of the cumulated payoff at time $t$, when both players use optimal strategies in the game $\Ga_\la(k)$, for sufficiently small $\la\in(0,1]$. 

\begin{thm}[Strong constant payoff property] \label{paiement_constant_SP} 
For any $\ep>0$, there exists $\lambda_0 \in (0,1)$ so that for all $\lambda \in (0,\lambda_0)$, $t \in [0,1]$,  $k \in K$, and $(\sigma^1, \sigma^2) \in \Sigma_\la^1 \times \Sigma_\la^2$ one has: 
\begin{equation}\label{strong}
\left|\ga_\la(k,\si^1,\si^2;t)-t v^*(k)\right| \leq \ep \, .
\end{equation}
%Moreover, the converges is uniform on $[0,1]$.
% Finite zero-sum stochastic games have the strong constant payoff property \textbf{(SP)}.
%i.e. any optimal strategy profile is persistently optimal.
\end{thm}
This result solves the conjecture raised by Sorin, Venel and Vigeral \cite{SVV10} in a \emph{strong sense}. That is, where \cite{SVV10} conjectured the existence of a strategy profile $(\si^1,\si^2)$ so that \eqref{strong} holds, we prove that the constant payoff property holds for \emph{every} optimal strategy profile. 
\section{Weak constant payoff property}\label{section_WP}
 
%\textcolor{blue}{MOB: j'ai l'impression qu'on ne prouve plus cela, car on est sur une sous-suite...} \textcolor{purple}{j'ai corrigé}
 %The proof is divided in three parts. First, we establish some useful properties which hold for any {Puiseux family of Markov chains}. Second, we prove some variational inequalities satisfied by the values. Third, we combine the two for the specific choice of an optimal Puiseux strategy profile.  

\begin{thm}[Weak constant payoff property]\label{paiement_constant_WP} 
For any optimal Puiseux strategy profile \linebreak %Catoni
$(x^1_\la,x^2_\la)_{\la\in(0,1]}$,
 %   in $\Ga_\la(k)$ for all $\la\in (0,1]$. 
%Then, % for all $t\in [0,1]$:%, uniformly on $t\in[0,1]$: % , for all $t\in [0,1]$ one has:
$$\limla \ga_\la(x^1_\la,x^2_\la; t)= tv^*\qquad \forall t\in [0,1]\, .$$ 
\end{thm}
\begin{rem}Theorem \ref{paiement_constant_WP} establishes the constant payoff conjecture of Sorin, Venel and Vigeral~\cite{SVV10} for a specific family of strategy profiles. %, that is, the existence of a couple of strategies  in a \emph{weak} sense, as it establishes the constant payoff property for some specific optimal strategy profiles.
\end{rem}
\begin{rem}
For $\ep>0$, let $(\sigma_\ep, \tau_\ep$) be a pair of $\ep$-optimal \emph{uniform} strategies, that is, satisfying $\ga_\la(k,\si_\ep,\tau)\geq  v^*(k)-\ep$ and $\ga_\la(k,\si,\tau_\ep)\leq  v^*(k)+\ep$ for all $k \in K$, for all pair of strategies $(\si,\tau)$ and all $\la$ small enough. Such a pair exists by Mertens and Neyman \cite{MN81}. Then, for all $k \in K$, %It follows , that is, so that . %, and satisfies 
% that is, a strategy that guarantees $v^*(k)-\ep$ (resp., $v^*(k)+\ep$) in any $\lambda$-discounted game $\Gamma_{\lambda}(k)$, for $\lambda$ small enough. 
for any sequence of strategies $(\sigma_\la,\tau_\la)$ and any $t \in [0,1]$ one has $\liminf_{\la\to 0} \ga_\la(k,\sigma_\ep,\tau_\la; t) \geq tv^*(k)-\ep$ and 
$\limsup_{\la\to 0} \ga_\la(k,\sigma_\la,\tau_\ep; t) \leq tv^*(k)+\ep$. In particular, $(\sigma_\ep,\tau_\ep)$ satisfies the weak constant payoff property, up to an error term $\ep$. Nonetheless, these strategies are in general not stationary. 
\end{rem}
%This  a weaker version of Theorem \ref{paiement_constant_SP}, which is an important intermediate step is establishing .  
\begin{proof} %Among them, we highlight the invariance propertu importaFirst, 
%\subsection{An alternative proof of Theorem \ref{paiement_constant_WP}} 
%\paragraph{Comments.} 
%We provide here an alternative, shorter proof of Theorem \ref{paiement_constant_WP} suggested by %one of the reviewers. An important difference to the one presented in the text is that it is less explicit %in %the following sense. First, 
%the relation $\Pi  v^*=v^*$ is established. %(i.e. Corollary \ref{key}) i

%Second, the relation $\Pi=\Pi_t  +(1-t)\frac{\partial }{\partial t} \Pi_t \Pi$ is replaced by one that does not rely neither on the properties satisfied by the payoff-relevant cycles, nor on the existence and smoothness of $\Pi_t$. Rather, one uses this recursive equality on accumulation points.  The way in which this equality and $\Pi v^*=v^*$ imply the desired result also differs from our original proof. 
%\paragraph{Step 0:} \emph{Notation}.
 In the sequel, $(x^1_\la,x^2_\la)$ denotes an optimal Puiseux strategy profile. %, and $(Q_\la)$ and $(g_\la)$ denote, respectively, the corresponding families of stochastic matrices and payoff vectors. 
Let $\la_0>0$ be such that all the coordinates of $\la\mapsto x^1_\la$ and $\la\mapsto x^2_\la$ are Puiseux series on $(0,\la_0)$.  The result is clear for $t=0$ and $t=1$ so we fix in the sequel some $t\in (0,1)$. 
 
\paragraph{Step 0:} \emph{Introduction of tools}. \\
For any stationary strategy profile $(x^1,x^2)$, define the matrix $\Pi(\la,x^1,x^2)\in \R^{K\times K}$ for all $\la \in (0,1]$ and  the vector $g(x^1,x^2)\in \R^{K}$ by setting 
\begin{eqnarray*} \Pi^{k,\ell}(\la,x^1,x^2)&:=&\E_{x^1,x^2}^{k}\left[ \sum\nolimits_{m\geq 1}\la (1-\la)^{m-1}
\ind_{\{ k_m=\ell\} } \right]\qquad \forall (k,\ell)\in K^2, %Catoni
% \\ &=& \sum_{m \geq 0} \lambda (1 - \lambda)^m Q_\la^m(k,\ell)
% \text{ where } Q_\la(k,\ell) = x_\la^1(k,i) x_\la^2(k,j) q(\ell \, | \, k,i,j),
\\ g^k(x^1,x^2)&:= &\sum_{(i,j)\in I\times J}x^1(k,i)x^2(k,j)g(k,i,j)\qquad \forall k \in K\,.
\end{eqnarray*} 
The real $\Pi^{k,\ell}(\lambda,x^1,x^2)$ represents the expected (discounted) fraction of the game spent in state $\ell$, given that players play stationary strategies $x^1$ and $x^2$, and the initial state is $k$. 
The real $g^k(x^1,x^2)$ represents the expected stage payoff, given that players play $x^1$ and $x^2$ and the initial state is $k$. 
We claim that $\la\mapsto \Pi(\la,x^1_\la,x^2_\la)$ is a bounded real Puiseux series, so that the limit $\Pi:=\limla \Pi(\la,x^1_\la,x^2_\la)\in \R^{K\times K}$ exists. 
Indeed, %Catoni 
define a stochastic matrix $Q_\la\in \R^{K\times K}$ 
\begin{equation}
\label{def_Q_lambda}
Q_\la(k,\ell) := \sum_{(i,j)\in I\times J} x_\la^1(k,i)x_\la^2(k,j)q(\ell\,|\,k,i,j)\qquad \forall (k,\ell)\in K^2, 
\end{equation}
so that 
\[
\Pi(\la,x^1_\la,x^2_\la) = \sum_{m\geq 0}\la(1-\la)^m 
Q_\la^m.
\]
The real $Q_\lambda(k,\ell)$ represents the probability that tomorrow's state is $\ell$, given that the state is $k$ today and players play $(x^1_\la,x^2_\la)$.
%coucou
Consider the Markov chain $M_\la$ on $K\cup \{*\}$ defined as follows:
$$M_\la(k,\ell)=\begin{cases} (1-\la)Q_\la(k,\ell) & \text{ if } k,\ell \in K\\
\la & \text{ if } k \in K, \ \ell = *\\
(1+|K|)^{-1} & \text{ if } k=*, \ \ell \in K \cup \left\{*\right\}.\end{cases}$$
Remark that, if $X_n$ is the Markov chain with transitions $M_\la$, then 
\begin{eqnarray*}\sum\nolimits_{m\geq 0}(1-\la)^{m}Q_\la^m(k,\ell)&=&\E\left[ \sum\nolimits_{m=1}^{\tau(K)} \ind_{\{X_m=\ell\}}\,|\, X_0=k\right],\\
&=& \frac{\sum_{\pi \in G_{k,\ell}(K\backslash \{\ell\})} \prod_{(k',\ell')\in \pi} M_\la(k',\ell')}
{\sum_{\pi \in G(K)} \prod_{(k',\ell')\in \pi} M_\la(k',\ell')}\, , \end{eqnarray*}
where for any set $A$, $\tau(A):=\inf\{m\geq 0, X_m\notin A\}$, $G(A)$ is the set of acyclic graphs such that exactly one arrow starts from any point of $A$ and no arrow starts outside of $A$, and $G_{k,\ell}(A)$ is the set of graphs of $G(A)$ such that $k$ leads to $\ell$, where $k\in A$ and $\ell\notin A$ (see \cite[Lemma 3.1]{catoni99} for a proof). %Catoni
We conclude that $\Pi(\lambda, x^1_\la, x^2_\la)$ is a Puiseux series since
it is the ratio of two finite sums of Puiseux series.

\paragraph{Step 1:}  \emph{The equality $\Pi v^*=v^*$}.\\
Define the map $f: (0,\la_0)^3\to \R^K$ by setting 
\begin{equation*}
f(\lambda,\lambda^1,\lambda^2)= \Pi(\lambda,x_{\lambda^1}^1,x_{\lambda^2}^2)  g(x_{\la^1}^1,x_{\la^2}^2)\qquad \forall (\la,\la^1,\la^2)\in (0,\la_0)^3\,.
\end{equation*}
Note  that $f$ is differentiable on $(0,\la_0)^3$, because it is a power series in the variables $\la$, $(\la^1)^{1/N}$ and $(\la^2)^{1/N}$, for some $N\in \N$.
 For each $k\in K$,
$x^1_\lambda$ and $x^2_\lambda$ are optimal strategies in $\Gamma_\lambda(k)$, so that % for each $k\in K$, 
the map $(\lambda^1,\lambda^2)\rightarrow f^k(\lambda,\lambda^1,\lambda^2)$ has a saddle point at $(\lambda,\lambda)$, for each $\la\in(0,\la_0)$. 
%\textcolor{red}{Pourquoi mettre un $k$ en indice? Ce n'est pas cohérent avec le reste}
Hence, %The function $f$ being differentiable by the regularity of the map $\la\mapsto (x_\la^1,x_\la^2)$, 
its partial derivatives satisfy
\begin{equation}\label{eqderiv0}
\frac{\partial f}{\partial \lambda^1}(\lambda,\lambda,\lambda)=\frac{\partial f}{\partial \lambda^2}(\lambda,\lambda,\lambda)=0\, .
\end{equation}
%Let $h(\lambda):=f(\lambda,\lambda,\lambda)\in \R^K$. % and $g_\la=g(x^1_\la,x^2_\la)$. 
For any $\la\in (0,1]$, set $h(\lambda):=f(\lambda,\lambda,\lambda)\in \R^K$. By the choice of $(x_\la^1,x^2_\la)$, $h(\la)=v_\lambda$. 
Define %stochastic matrix $Q_\la\in \R^{K\times K}$ by %Catoni 
%\eqref{def_Q_lambda} and 
a payoff vector $g_\la\in \R^K$ by: %. That is, for each $(k,\ell)\in K^2$ we set %, the following maps are Puiseux series on $(0,\la_0)$:
%\begin{eqnarray*} Q_\la(k,\ell)&:=&\sum_{(i,j)\in I\times J} x_\la^1(k,i)x_\la^2(k,j)q(\ell\,|\,k,i,j)\qquad \forall (k,\ell)\in K^2,\\ 
\[
g_\la(k) := g^k(x_\la^1,x_\la^2)= \sum_{(i,j)\in I\times J} x_\la^1(k,i)x_\la^2(k,j)g(k,i,j)\qquad \forall k\in K\,.
\]
%\end{eqnarray*} %\qquad \forall (k,\ell)\in K^2\,.$$
%Note that 
%\begin{equation*}
%h(\lambda)= \sum_{m \geq 0} \la(1-\lambda)^{m} Q^{m}_{\lambda}g_\la.
%\end{equation*}
%Note that $\la\mapsto Q_\la(k,\ell)$ and $\la\mapsto g_\la(k)$ are also Puiseux series on $(0,\la_0)$ for all $(k,\ell)\in K^2$.
%Recall that $Q_\la\in\R^{K\times K}$ and $g_\la\in \R^K$ are the stochastic matrix and payoff vector corresponding to $(x^1_\la,x^2_\la)$. %:=g(x^1_\la,x^2_\la)\in \R^K$ in the sequel. 
The relation \eqref{eqderiv0} implies that the derivative of $h$ satisfies
\begin{eqnarray*}
h'(\lambda)&=& \left(\frac{\partial}{\partial \la}+\frac{\partial}{\partial\la^1}+\frac{\partial}{\partial\la^2}\right) f(\la,\la^1,\la^2)_{|\la^1=\la^2=\la} \\
&=&  \frac{\partial}{\partial \la}f(\la,\la^1,\la^2)_{|\la^1=\la^2=\la} \\
&=& \left[\frac{\partial}{\partial \la}  \Pi(\lambda,x_{\lambda^1}^1,x_{\lambda^2}^2)_{|\la^1=\la^2=\la}\right]  g_\lambda
\\
&=& \sum_{m \geq 0} (1-\lambda)^{m} Q^{m}_{\lambda}g_\la
-\sum_{m \geq 0} m \lambda(1-\lambda)^{m-1} Q^{m}_{\lambda}g_\la\,.
\end{eqnarray*}
As $\Pi(\la,x^1_\la,x^2_\la) \, g_\la =\sum_{m\geq 0}\la(1-\la)^m Q_\la^m g_\la= v_\la$, it follows that
  %let $\Pi_\la:=\sum_{m\geq 1}\la(1-\la)^{m-1}Q_\la^{m-1}$. Then, 
\begin{eqnarray*}
\Pi(\la,x^1_\la,x^2_\la) \, v_\lambda&=& %(x^1_\la,x^2_\lambda)
\Pi(\la,x^1_\la,x^2_\la) \Pi(\la,x^1_\la,x^2_\la) \, g_\la \\&=&
\sum_{m \geq 0, n \geq 0} \lambda^2 (1-\lambda)^{n+m}
Q_\lambda^{m+n} g_\la \\&=&\sum_{m \geq 0} (m+1) \lambda^2(1-\lambda)^mQ^m_\lambda g_\la
\\
&=& \la v_\la +\la(1-\la)\sum_{m \geq 0} m \lambda(1-\lambda)^{m-1} Q^{m}_{\lambda}g_\la,
\end{eqnarray*}
where $\la^2$ stands for ``$\la$ square'' in the two previous equations. Consequently, replacing the expression of $h'(\la)$ one obtains
\begin{eqnarray*}
\Pi(\la,x^1_\la,x^2_\la) v_\lambda&=&\lambda v_\lambda+\lambda(1-\lambda)\left(\lambda^{-1} v_\lambda-h'(\lambda)\right)
\\
&=& v_\lambda-\lambda(1-\lambda) h'(\lambda)\,.
\end{eqnarray*}
Since each coordinate of $h(\la)$ is a bounded real Puiseux series, it follows that % by the choice of $(x^1_\la,x^2_\la)$, one has %  %the pair $\la\mapsto v_\la(k)$ has a Puiseux expansion near $0$ 
%for every $k\in K$, we have 
$\lim_{\lambda \rightarrow 0} \lambda h'(\lambda)=0$. Taking $\la$ to $0$ in the previous expression thus gives $\Pi  v^*=v^*$.

%\end{proof}
%\end{lem}
\paragraph{Step 2:} \emph{Relation between $\Pi$ and the occupation measure at time $t$}.\\ %Fix $t\in(0,1)$. 
For every $\la\in(0,1]$, by the Markov property, % one has %We have Then,
\begin{equation} \label{prog_dyn}
\Pi(\la,x^1_\la,x^2_\la)=\sum_{m= 1}^{\varphi(\la,t)} \la(1-\la)^{m-1}Q_\la^{m-1}+(1-\lambda)^{\varphi(\lambda,t)} Q^{\varphi(\lambda,t)}_{\lambda}\Pi(\la,x^1_\la,x^2_\la)\,.
\end{equation}
%Let $\Pi$ be an accumulation point of $\Pi(\la,x^1_\la,x^2_\la)$.
In order to establish Theorem \ref{paiement_constant_WP}, we are going to prove that $t v^*$ is the only accumulation point of $(\gamma_\la(x^1_\la,x^2_\la;t))$, as $\la$ vanishes. 
Let $\gamma^* \in \m{R}^K$ be such an accumulation point, and let $g^*:=\limla g_\la\in \R^K$. By consecutive extractions, %Catoni
one can find a vanishing sequence $(\lambda_r)$ such that $(\gamma_{\la_r}(x^1_{\la_r},x^2_{\la_r};t))$ converges to $\gamma^*$, $Q_{\lambda_r}^{\varphi(\lambda_r,t)}$ converges to some $\pi_t \in \R^{K\times K}$, and $\sum_{m= 1}^{\varphi(\la_r,t)} \la_r(1-\la_r)^{m-1}Q_{\la_r}^{m-1}$ converges to some 
$\Pi_t \in \R^{K\times K}$. In particular, $\gamma^*=\Pi_t g^*$, and thus our aim is to prove that $\Pi_t g^*=tv^*$. 

Setting $\la=\lambda_r$ and having $r$ 
going to infinity in \ref{prog_dyn}, we obtain
%. Let   Up to extracting, we can assume that both 
% Let $\pi_t$ and $\Pi_t$ be accumulation points, respectively, of $Q_\lambda^{\varphi(\lambda,t)}$ and $\sum_{m= 1}^{\varphi(\la,t)} \la(1-\la)^{m-1}Q_\la^{m-1}$.
%Taking $\la$ to $0$ along a subsequence realizing these two accumulation points simultaneously, one thus obtains 
%let $\Pi_t$ be an accumulation points of $\Pi^{\lambda}_1$ and $\Pi^{\lambda}_t$. 
%Having $\lambda \rightarrow 0$ in the above relation, we get
\begin{equation}\label{rec2}
\Pi=\Pi_t +(1-t)\pi_t \Pi\,. %, \quad \text{where} \ P_t:=\frac{\Pi_t}{t}\,.
\end{equation}
Iterating this equation, one gets 
\begin{equation}\label{rec}
\Pi=\sum_{m \geq 0} (1-t)^{m} \pi_t^{m} \Pi_t\,.
\end{equation}
Set $P_t:=\frac{1}{t}\Pi_t$, which is a stochastic matrix on the state space $K$, and note that the previous relation can be expressed as 
\begin{equation}\label{zer}
\Pi=\m{E}(\pi_t^{X})P_t,
\end{equation}
where $X+1$ is a geometric random variable with parameter $t$, i.e. $\m{P}(X=m)=t(1-t)^{m}$ for all $m\geq 0$. %
%In particular, its support is the entire set $\N$. %set  is supported on all% Then, on the one hand, from \eqref{rec} one obtains
\paragraph{Step 3:} \emph{A lemma on stochastic matrices}.  \\
%\begin{lem}
%Let $X$ be the random time such that $\m{P}(X=m)=t(1-t)^{m}$. 
Let $X$ be the random variable defined in Step 2.
Since $\m{P}(X = 0 ) > 0$, 
for any stochastic matrix $M$, the stochastic matrix 
$N:=\m{E}(M^{X})$ is aperiodic. 
Therefore $N^n$ has a limit when $n$ goes to infinity, that we call
$N^\infty$. %Catoni
% We call $N^\infty$ the limit of $N^n$ as $n$ goes to infinity. %denotes a real random variable so that $\m{P}(X=m)=t(1-t)^{m}$. % Then, on the one hand, from \eqref{rec}  . Therefore, $N^n$ has a limit, denoted by $N^\infty$, and moreover %. 
We claim that
\begin{equation}\label{rft}
M N^\infty=N^\infty \,.
\end{equation}
%\end{lem}
%\begin{proof}
%Catoni
Indeed, since $\m{P}(X = 1) > 0$,
$\sup_{n \in \m{N}} M^n(i,j) > 0  \Leftrightarrow 
\sup_{n \in \m{N}} N^n(i,j) > 0$, 
so that the recurrent communicating classes of $M$ and $N$ are the 
same. The number of such classes is equal to the dimension 
of $\ker(M-I)$, 
that is therefore also the dimension of $\ker ( N - I)$. Moreover, as
$Nf = \m{E}(M^X f)$, $Mf = f \Rightarrow Nf = f$, so that 
\[
\ker( M - I ) \subset \ker( N - I). 
\]
Consequently, these two eigenspaces are equal, since they
have the same dimension. Since $(N-I)N^\infty = 0$,
the columns of $N^{\infty}$ belong to $\ker(N-I)$,
and therefore also to $\ker(M-I)$, so that
$(M-I)N^{\infty} = 0$ as claimed.
\paragraph{Step 4:} \emph{The equality $\pi_t v^*=v^*$ for all $t\in(0,1)$}. \\ 
Let %For convenience, set $P_t:=\Pi_t/t$ and let 
$P_t^{\infty}$ be an accumulation point of the sequence $(P_t^n)_n$. 
The matrices $\m{E}(\pi_t^{X})$ and $P_t$ commute because they are limits of weighted sums of powers of $Q_\la$, which commute. Hence, using  the equality $\Pi v^*=v^*$ established in Step 1, and the relation \eqref{zer}, it follows that for all $n\in \N$,
\begin{eqnarray*}
 v^*&=&\Pi^n v^*\\ 
 &=&\left(\m{E}(\pi_t^{X})P_t \right)^n v^*
\\
&=& 
\m{E}(\pi_t^{X})^n P_t^n  v^*\,.
\end{eqnarray*}
Thus, as $n$ tends to infinity along a subsequence defining $P_t^\infty$, one has
\begin{equation}\label{rft2}
 v^*=\m{E}(\pi_t^{X})^{\infty} P_t^{\infty}  v^*\,.
\end{equation}
Combining the equality \eqref{rft} of Step 3 with $M=\pi_t$ and $N=\m{E}(\pi_t^{X})$, and \eqref{rft2}, one obtains 
\begin{eqnarray*}
\pi_t v^*&=&\pi_t \m{E}(\pi_t^{X})^{\infty} P_t^{\infty} v^*\\ &=&\m{E}(\pi_t^{X})^{\infty} P_t^{\infty} v^*\\ &=&v^*\,.\end{eqnarray*}
% In other words, $\Pi v^*=v^*$ and $\Pi;
\paragraph{Step 5:} \emph{Conclusion: $\Pi_t g^* = tv^*$ for all $t\in[0,1]$}. \\
Multiplying the two sides of \eqref{rec2} by $g^*$ yields 
$$\Pi g^*=\Pi_t g^*+(1-t)\pi_t \Pi g^*\,.$$
Yet, for all $\la\in(0,1]$ one has $\Pi(\la,x^1_\la,x^2_\la) g_\la=v_\la$. Taking limits as $\la$ goes to $0$, it follows that $\Pi g^*=v^*$. %Catoni 
%and using the equality $\Pi v^*=v^*$ obtained in Step 1 
%since $\Pi g^*=v^*$ holds by the choice of $(x^1_\la,x^2_\la)_\la$ and $\Pi v^*=v^*$ by Step 1.  
Combined with the equality $\pi_t v^*=v^*$ obtained in Step 4, this gives $v^*=\Pi_t g^* +(1-t) v^*$, so that $\Pi_t g^* = tv^*$.\end{proof} %$\hfill \square$

%\textcolor{blue}{MOB: du coup, on a prouvé le résultat pour une sous-suite seulement non ?}
%\textcolor{purple}{J'ai corrigé}

\begin{rem} In Step 1 we obtained the following expression for the derivative of $v_\la$:
$$\frac{\partial}{\partial \la} v_\la= \frac{1}{\la(1-\la)}\left(v_\la-\Pi(\la,x^1_\la,x^2_\la) v_\la\right)\,.$$

\end{rem}

 %then splits into values $\{ v^*(k), \, k\notin R\}$ with probabilities $\{\pi^*(k)\, k\notin R\}$ such that 
\section{Strong constant payoff property}\label{section_SP}
We now prove our main result: Theorem \ref{paiement_constant_SP}.  Roughly speaking, we want to prove that the constant payoff property, which is true for any optimal Puiseux strategy profile, holds for \emph{any} pair of optimal strategies. The main idea is the following: an equivalence between the strong constant payoff property and the convergence to $0$ of the values of a certain class of discounted Markov decision processes. %DPs (i.e. Markov decision processes) has an asymptotic value equal to 0. 
We start with a technical property for real sequences, from which we derive an equivalent formulation of the strong constant payoff property. \\
%The proof will be divided in two parts. 
%\textcolor{red}{J'ai nettoyé (et mis tout dans le bon ordre) la nouvelle preuve}\\
%\textcolor{red}{J'ai remplacé $\la/\de$ par $\de\la$ partout, ici et ailleurs...}\\

%Before we proceed, we need to define what a general strategy is. So far, we have restricted our attention to a simplified model in which both players observe the current state (see Definition \ref{strat}). However, more generally, the players observe signals and a strategy is a function from past signals to mixed actions.

For each $k \in K$, define $X^1_{\lambda}(k)\subset \De(I)$ (resp., $X^2_{\lambda}(k)\subset \De(J)$) the set of optimal strategies for Player $1$ (resp., 2) in the one-shot zero-sum game with action sets $I$ and $J$ and payoff:
%two-player zero-sum one-shot game with action sets $I$ and $J$ and 
\begin{equation}\label{shap_game}R(i,j):=\la g(k,i,j)+(1-\la)\sum_{\ell \in K}q(\ell|k,i,j)v_\la(\ell)\, .\end{equation}

% us introduce the one-shot Shapley game at state $k$. 
The following lemma is a direct consequence of \cite[Corollary 2.6.3]{RenaultTSE}. We state it for Player 1 but, as players have symmetric roles, a similar result holds for Player 2. 
 \begin{lem} 
 A general strategy $\si^1$ of Player $1$ is optimal in the discounted stochastic game $\Ga_\la$ if, and only if, for any $k_1 \in K$, for any $m \geq 1$, for any strategy $\sigma^2 \in \Sigma^2$ of Player $2$ and any finite history $h^m \in H_m$ such that $\m{P}^{k_1}_{\sigma^1,\sigma^2}(h^m)>0$, %all given any set of past signals $(a^i_1,\dots,a^i_m)$ at stage $m\geq 1$, 
 Player $1$ plays a mixed action in $X^1_\la(k_m)$. 
 \end{lem}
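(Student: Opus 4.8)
The plan is to deduce this lemma from the cited result in \cite{RenaultTSE}, but let me indicate how a self-contained argument would run, since the statement is essentially a dynamic-programming verification. First I would recall Shapley's equation: for every state $k$, the value $v_\la(k)$ equals the value of the one-shot matrix game with payoff $R(i,j)$ in \eqref{shap_game}, and $X^1_\la(k)$ is precisely the set of optimal mixed actions for Player $1$ in that game. The key quantitative fact is the \emph{one-stage loss decomposition}: for any strategy profile $(\si^1,\si^2)$ and any initial state $k$, writing $\delta^1_m(h^m)$ for the gap between $v_\la(k_m)$ and the payoff Player $1$ guarantees in the matrix game at $k_m$ against Player $2$'s mixed action at stage $m$ (this gap is nonnegative only when $\si^1_m[h^m]\notin X^1_\la(k_m)$ could make it positive; more precisely, it is $v_\la(k_m)-\min_{j}\sum_i\si^1_m[h^m](i)R(i,j)$, which is $\ge 0$ iff $\si^1_m[h^m]$ is a best response achieving at most the value), one obtains
\begin{equation*}
v_\la(k)-\ga_\la(k,\si^1,\si^2)=\E^k_{\si^1,\si^2}\Big[\sum\nolimits_{m\ge 1}(1-\la)^{m-1}\,\delta^1_m(h^m)\Big].
\end{equation*}
This identity is proved by iterating Shapley's equation exactly as in the proof of Proposition \ref{lim_4}, keeping the error terms instead of discarding them.

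Granting this decomposition, the two implications follow quickly. For sufficiency: if at every reachable history $h^m$ (reachable under $\si^1$ against \emph{some} $\si^2$) Player $1$ plays a mixed action in $X^1_\la(k_m)$, then $\delta^1_m(h^m)\le 0$ at every such history, hence $\ga_\la(k,\si^1,\si^2)\ge v_\la(k)$ for every $\si^2$; combined with $\ga_\la(k,\si^1,\si^2)\le v_\la(k)$ whenever $\si^2$ is optimal, this gives $\min_{\si^2}\ga_\la(k,\si^1,\si^2)=v_\la(k)$, i.e.\ $\si^1$ is optimal. For necessity: suppose $\si^1$ is optimal but there exist $m$, a history $h^m$ with $\m P^{k_1}_{\si^1,\si^2}(h^m)>0$ for some $\si^2$, such that $\si^1_m[h^m]\notin X^1_\la(k_m)$, i.e.\ $\delta$-gap $\delta^1_m(h^m)>0$ when Player $2$ plays a suitable best response $j^*$ at that stage. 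I would then build a deviation $\tilde\si^2$ for Player $2$: follow any optimal $\si^2$ of the stochastic game until history $h^m$ is reached, at stage $m$ play the pure action $j^*$ exploiting the gap, and from stage $m+1$ on play optimally in $\Ga_\la(k_{m+1})$. By the decomposition (or directly by Shapley's equation applied from stage $m$), this yields $\ga_\la(k,\si^1,\tilde\si^2)\le v_\la(k)-(1-\la)^{m-1}\m P^{k}_{\si^1,\tilde\si^2}(h^m)\,\delta^1_m(h^m)<v_\la(k)$, contradicting the optimality of $\si^1$.

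The main obstacle is a bookkeeping subtlety rather than a conceptual one: ``reachable history'' must be quantified over \emph{all} opponent strategies $\si^2$, so in the necessity direction one has to make sure the deviation $\tilde\si^2$ actually reaches $h^m$ with positive probability and that the stage-$m$ improvement is not washed out by what happens before $h^m$ — this is why the multiplicative weight $(1-\la)^{m-1}\m P^{k}_{\si^1,\tilde\si^2}(h^m)>0$ matters, and why one needs $\si^2$ to agree with $\si^1$'s view of the past up to $h^m$. One must also be careful that $X^1_\la(k_m)$ is the set of optimal actions of the matrix game with the \emph{fixed} payoff $R$ built from $v_\la$ (not from any continuation value), which is legitimate precisely because $v_\la$ solves Shapley's equation; this is exactly the content invoked from \cite[Corollary 2.6.3]{RenaultTSE}, so in practice I would simply cite that result and present the two-line verification above as a remark. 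The symmetric statement for Player $2$ is obtained by replacing $g$ with $-g$ and exchanging the roles of the players.
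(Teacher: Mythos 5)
The paper offers no argument for this lemma: it is quoted as a direct consequence of \cite[Corollary 2.6.3]{RenaultTSE}, which is also the fallback you mention, so your self-contained verification is by construction a different route. Its sufficiency half is sound in substance, but your ``one-stage loss decomposition'' is not an identity as written: with $\delta^1_m$ defined through $\min_j$, the right-hand side only dominates $v_\la(k)-\ga_\la(k,\si^1,\si^2)$. The correct identity has increment $v_\la(k_m)-R\bigl(\si^1_m[h^m],\si^2_m[h^m]\bigr)$, with $R$ as in \eqref{shap_game} evaluated at the mixed action \emph{actually played} by Player 2 at stage $m$; each such term is $\le 0$ as soon as $\si^1_m[h^m]\in X^1_\la(k_m)$, which gives $\ga_\la(k,\si^1,\si^2)\geq v_\la(k)$ for every $\si^2$, hence optimality.

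The necessity half has a genuine gap, precisely at the step you flag without resolving. To reach $h^m$, your deviation $\tilde\si^2$ must play the specific actions $j_1,\dots,j_{m-1}$ recorded in $h^m$, which may be one-shot suboptimal for Player 2; the corresponding pre-$m$ increments of the decomposition are then strictly positive and are silently dropped in your bound $\ga_\la(k,\si^1,\tilde\si^2)\le v_\la(k)-(1-\la)^{m-1}\PP^{k}_{\si^1,\tilde\si^2}(h^m)\,\delta^1_m(h^m)$; mixing onto the path with small probability $\ep$ does not help, since the concessions cost order $\ep$ while the stage-$m$ gain is only of order $\ep^{m-1}$. In fact, with reachability quantified over \emph{arbitrary} $\si^2$ as in the statement, the implication cannot be established by any such deviation: take two states $\om,\al$ where Player 1 is a dummy in $\om$ and Player 2 chooses between staying in $\om$ with stage payoff $0$ and moving to the absorbing state $\al$ with stage payoff $1$, while in $\al$ Player 1 chooses between stage payoffs $1$ and $0$; then $v_\la(\om)=0$, $v_\la(\al)=1$, $X^1_\la(\al)$ is the payoff-$1$ action alone, yet the strategy of Player 1 that plays the payoff-$0$ action exactly once upon entering $\al$ is still optimal in $\Ga_\la(\om)$ (its payoff is nonnegative against every $\si^2$ and equals $0=v_\la(\om)$ against ``never move''), although it leaves $X^1_\la(\al)$ at a history of positive probability under some $\si^2$. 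The provable statement, which is also what Section \ref{section_SP} actually needs, concerns histories of positive probability under a \emph{pair} of optimal strategies: there one lets $\tilde\si^2$ coincide with the optimal $\si^2$ except on continuations of $h^m$, so that $\PP(h^m)$ and all pre-$m$ terms are unchanged; this forces the continuation of $\si^1$ at $h^m$ to guarantee $v_\la(k_m)$, and then $R(\si^1_m[h^m],j)\ge v_\la(k_m)$ for every $j$ because continuation guarantees from stage $m+1$ on never exceed $v_\la(k_{m+1})$. As written, your necessity argument does not go through.
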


% \textcolor{red}{Il faudra harmoniser entre $k$ et $k_1$ pour l'état initial}
% \begin{proof} Obvious.
% \end{proof}

\subsection[Characterisation]{Characterisation of the strong constant payoff property}
 \subsubsection{A technical lemma on real sequences} \label{real_seq}
% \subsubsection{On real sequences}\label{real_seq}
%$w^\de_\la$ 
%, for any $\de>0$, the expected payoff $$\E[\sum_{m\geq 1}\de\la(1-\de\la)^{m-1}(v_\la(k_m)-v_\la(k_1))$

%, the values $w^\de_\la$ of the MDP converge to 0 as $\la$ vanishes. 
 
%implies that for any optim this class of discounted MDPs has an asymptotic value equal to 0 in \textit{regular stationary strategies}. The existence of (regular selections of) optimal stationary strategies for this discounted MDP implies that the optimizer cannot get more than 0 even in general strategies, which implies the strong constant payoff property. 
%\\
%\\

%functions $\la \mapsto (x_\la,y_\la)$ is a semi-algebraic,

%N ∈N, λ0 >0 and rm ∈RΩ (m≥0) such that vλ(ω)=  rm(ω)λm/N for all λ∈(0,λ0) and m≥0
%ω ∈ Ω. In particular, the discounted values converge. Note that the same argument yields similar expansions for the optimal stationary strategies.

%\subsection{A reformulation of the strong constant payoff property}
%\subsection{A property of real sequences}
Let $(u^{\lambda}_m)_{m \geq 1}$ be a fixed family of real sequences so that, for some constant $C\geq 0$, for all $\la \in (0,1]$ and all  $m\geq 1$, 
\begin{equation} \label{condseq}
\left|u^{\lambda}_{m+1}-u^{\lambda}_m \right| \leq C \lambda \quad \text{and} \quad \left|u^{\lambda}_m\right| \leq C\, .  %,\quad \forall m\geq 1, \ \forall \la\in(0,1]
\end{equation}
%For all $t \in [0,1]$, define
%\begin{equation*}
%A_\lambda(t):=\sup_{m \in [1,\varphi(\lambda,t)]} 
%\left| u^{\lambda}_{m} \right|
%\end{equation*}
%The family $(u^{\lambda}_m)_{m \geq 1}$ is fixed throughout this section, and 
For each $\de>0$ we set 
\begin{equation*}
B_\lambda(\delta):=\sum\nolimits_{m \geq 1} \de \la (1-\de \la)^{m-1} u^{\lambda}_m \, .
\end{equation*}
\begin{proposition}\label{analyse}
The two following statements are equivalent:
\begin{enumerate}
\item[$(i)$] %\label{state1}
For all $t \in (0,1)$, %Catoni
$u^{\la}_{\varphi(\la,t)}$ vanishes as $\lambda$ tends to 0.
\item[$(ii)$]  %\label{state2}
For all $\delta>0$, $B_\lambda(\delta)$ vanishes as $\lambda$ tends to 0.
\end{enumerate}
\end{proposition}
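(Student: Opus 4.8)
\emph{The plan} is to pass from the discrete stage index to a continuous ``game-time'' variable and to recognise $(i)$ and $(ii)$ as two faces of one statement. For $\lambda\in(0,1]$, define the rescaled profile $\tilde u^\lambda\colon[0,\infty)\to\R$ by $\tilde u^\lambda(s):=u^\lambda_{\lfloor s/\lambda\rfloor+1}$. Condition \eqref{condseq} gives $|\tilde u^\lambda|\le C$ and $|\tilde u^\lambda(s)-\tilde u^\lambda(s')|\le C|s-s'|+C\lambda$ for all $s,s'\ge 0$, so $(\tilde u^\lambda)_\lambda$ is uniformly bounded and asymptotically equi‑Lipschitz. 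I will prove that both $(i)$ and $(ii)$ are equivalent to the property
\begin{equation*}
\tilde u^\lambda\longrightarrow 0\quad\text{uniformly on compact subsets of }[0,\infty)\text{, as }\lambda\to 0.\tag{$\star$}
\end{equation*}
Since \eqref{rel_varphi} gives $\varphi(\lambda,t)=-\ln(1-t)\,\lambda^{-1}+o(\lambda^{-1})$ and \eqref{condseq} gives $|u^\lambda_m-u^\lambda_{m'}|\le C\lambda|m-m'|$, one has $\bigl|u^\lambda_{\varphi(\lambda,t)}-\tilde u^\lambda(-\ln(1-t))\bigr|\to 0$ for every $t\in[0,1)$; hence $(\star)\Rightarrow(i)$ is immediate.

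\emph{The two routine implications.} For $(i)\Rightarrow(\star)$, fix $T,\epsilon>0$, set $K:=\lceil CT/\epsilon\rceil$, $\alpha_i:=iT/K$ and $m_i:=\max(1,\lfloor\alpha_i/\lambda\rfloor)$. Every $m\in\{1,\dots,\lfloor T/\lambda\rfloor\}$ lies within $\epsilon/(C\lambda)+1$ of some $m_i$, so $|u^\lambda_m-u^\lambda_{m_i}|\le\epsilon+C\lambda$ and thus $\sup_{s\in[0,T]}|\tilde u^\lambda(s)|\le\max_{0\le i\le K}|u^\lambda_{m_i}|+\epsilon+2C\lambda$; letting $\lambda\to 0$ (each $|u^\lambda_{m_i}|\to 0$ by $(i)$, using $\varphi(\lambda,0)=1$ for $i=0$) and then $\epsilon\to 0$ yields $(\star)$. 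For $(\star)\Rightarrow(ii)$, fix $\delta>0$ and $T>0$; since the weights $\delta\lambda(1-\delta\lambda)^{m-1}$ are nonnegative with total mass $1$, splitting the sum at stage $\lfloor T/\lambda\rfloor$ gives
\begin{equation*}
|B_\lambda(\delta)|\ \le\ \sup_{1\le m\le\lfloor T/\lambda\rfloor}|u^\lambda_m|\ +\ C\,(1-\delta\lambda)^{\lfloor T/\lambda\rfloor},
\end{equation*}
where the first term tends to $0$ by $(\star)$ and $(1-\delta\lambda)^{\lfloor T/\lambda\rfloor}\to e^{-\delta T}$; hence $\limsup_{\lambda\to 0}|B_\lambda(\delta)|\le Ce^{-\delta T}$, and $T\to\infty$ finishes it.

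\emph{The hard implication $(ii)\Rightarrow(\star)$} is the heart of the matter. Rewrite $B_\lambda(\delta)=\int_0^\infty h_\lambda(s)\,\tilde u^\lambda(s)\,ds$ with $h_\lambda(s):=\delta(1-\delta\lambda)^{\lfloor s/\lambda\rfloor}$; then $h_\lambda(s)\to\delta e^{-\delta s}$ uniformly on compacts, while $h_\lambda(s)\le 2\delta e^{-\delta s}$ once $\delta\lambda\le\tfrac12$ (using $\ln(1-\delta\lambda)\le-\delta\lambda$). Take any sequence $\lambda_n\to 0$. By Arzelà--Ascoli the uniformly bounded, asymptotically equi‑Lipschitz family $(\tilde u^{\lambda_n})_n$ has a subsequence along which $\tilde u^{\lambda_n}\to g$ uniformly on compacts, for some continuous $g$ with $|g|\le C$. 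Dominated convergence (dominating function $2C\delta e^{-\delta s}$) together with $(ii)$ then gives $\int_0^\infty e^{-\delta s}g(s)\,ds=\delta^{-1}\lim_n B_{\lambda_n}(\delta)=0$ for every $\delta>0$, so the Laplace transform of the bounded continuous function $g$ vanishes identically; substituting $x=e^{-s}$ turns this into the vanishing of all moments of a bounded function on $[0,1]$, whence $g\equiv 0$ by the Weierstrass approximation theorem. Thus $0$ is the only subsequential limit of $(\tilde u^\lambda)$, which gives $(\star)$.

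\emph{Main obstacle.} Everything but $(ii)\Rightarrow(\star)$ is book‑keeping; the content is that implication, where one must reconstruct the local behaviour of $t\mapsto u^\lambda_{\varphi(\lambda,t)}$ from the whole one‑parameter family $(B_\lambda(\delta))_{\delta>0}$ of discounted averages — essentially a Laplace inversion. The Lipschitz bound in \eqref{condseq} is exactly what provides, via Arzelà--Ascoli, the compactness needed to pass from ``the transform of the limit vanishes'' to ``the limit vanishes''.
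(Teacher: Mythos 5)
Your proof is correct and follows essentially the same route as the paper's: equi-Lipschitz compactness (Arzel\`a--Ascoli) to extract a subsequential limit of the time-rescaled sequence, the observation that $(ii)$ forces all exponential/moment integrals of that limit to vanish, and Weierstrass approximation to conclude the limit is identically zero. The only difference is cosmetic: you work with a single rescaled step function on $[0,\infty)$ tested against the kernels $\delta e^{-\delta s}$ (with a dominated-convergence step), whereas the paper uses $\delta$-indexed linear interpolations on $[0,1)$ and the substitution $s=1-(1-t)^{1/\delta}$ to reach the equivalent moment conditions $\int_0^1(1-s)^{\delta-1}F(s)\,ds=0$.
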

\begin{proof} %Catoni
Consider the functions 
\[
f_{\lambda}(x) = u_{\lfloor x / \la \rfloor + 1}^\la \Bigl( 1 - x/\la
+ \lfloor x / \la \rfloor \Bigr) + u_{\lfloor x / \la \rfloor + 2} 
\Bigl( x / \la - \lfloor x / \la \rfloor \Bigr), \quad x \geq 0, 
\lambda > 0.
\]
For all $m \geq 0$, the function $f_\lambda$ is linear on each interval $[m \lambda,(m+1) \lambda]$, and satisfies $f_\lambda(m \lambda)=u^{\lambda}_{m+1}$. 
Remark that $\sup_{x \geq 0} \lvert f_{\lambda}(x) \rvert \leq C$ and that
\[ 
\bigl\lvert f_{\lambda}(y) - f_{\lambda}(x) \bigr\rvert \leq C (y- x), 
\quad 0 \leq x < y.
\] 
Since $\varphi(\lambda,t)=\left\lceil\frac{\ln(1-t)}{\ln(1-\lambda)}\right\rceil$, one can easily see that for any $t \in (0,1)$, 
\begin{equation}
\label{Laplace1}
\lim_{\lambda \rightarrow 0+} u^\la_{ \varphi(\lambda, t)} - f_{\lambda} \bigl( - \ln(1 - t) \bigr) 
 = 0.
\end{equation}
Elementary computations also show that for any $\delta > 0$, 
\begin{equation}
\label{Laplace2}
\lim_{\lambda \rightarrow 0+} B_{\lambda}(\delta) - \int_0^{+ \infty}
\delta \exp ( - \delta x) f_{\lambda}(x) \, \mathrm{d} x = 0.
\end{equation}
The continuity properties of the Laplace transform ensure that
$\lim_{\lambda \rightarrow 0+} f_{\lambda}(x) = 0$, $ x > 0$
if and only if 
\[
\lim_{\lambda \rightarrow 0+} \int_0^{ + \infty} \exp ( - \delta x ) f_{\lambda}(x) \, 
\mathrm{d} x = 0, \quad \delta > 0.
\] 
To see this, we can for example apply \cite[XIII.1 Theorem 2. page 431]{Feller}
to the family of probability distributions
\[
Z_{\lambda, \alpha}^{-1} \exp(-\alpha x) \bigl[ f_{\la}(x) + 2 C \bigr] 
\, \mathrm{d} x, \quad \lambda > 0 , \; \alpha > 0, 
\]
on $[0, + \infty)$,
where
\[
Z_{\la, \alpha} = 
\int_0^{+ \infty} \exp(-\alpha x) \bigl[ f_{\la}(x) + 2 C \bigr]
\, \mathrm{d} x. 
\] 
This proves the proposition in view of \eqref{Laplace1} and 
\eqref{Laplace2}.
\end{proof}

\subsubsection{Application to stochastic games}
We now provide several alternative characterisations of the strong constant payoff property which will be used in the proof of Theorem \ref{paiement_constant_SP}. %Let $\bar{k} \in K$ be a fixed initial state. 
%\begin{eqnarray*}
%A_\lambda(t):&=&\sup_{(m,\sigma,\tau) \in [1,\varphi(t,\lambda)] \times \Sigma_{\lambda} \times T_{\lambda}} 
%\left|\m{E}^{k_1}_{\sigma,\tau}(v_{\lambda}(k_m))-v_{\la}(k_1)\right|
%\\
%&=& \sup_{(t',\sigma,\tau) \in [1,t] \times \Sigma_{\lambda} \times T_{\lambda}} 
%\left|\m{E}^{k_1}_{\sigma,\tau}(v_{\lambda}(k_{\varphi(\la,t')}))-v_{\la}(k_1)\right|
%%\left|\m{E}^{k_1}_{\sigma,\tau}(v_{\lambda}(k_{\lfloor \delta / \la \rfloor}))-v_{\la}(k_1)\right|.
%\end{eqnarray*}
%This section is devoted to prove Theorem \ref{paiement_constant2}. We start by proving the following statement. 
%\begin{definition}
%\begin{remarque}Equivalently, the weak (strong) constant payoff property can be stated as follows: there exists a pair (for all pairs) of families $(\si_\la,\tau_\la)_\la$ of strategies such that 
%$$\liminf_{\la \to 0} \left|\E_{\si_\la,\tau_\la}^{k_1}\left [\sum\nolimits_{m=1}^{\varphi(\la,t)}\la (1-\la)^{m-1}g_m\right] -t v^*(k_1) \right| =0,\quad \forall t\in (0,1]$$\end{remarque}
\begin{definition}
A family $(\sigma^1_\la,\sigma^2_\la)_\la$ is a \textit{discounted optimal strategy profile} if for all $\lambda \in (0,1]$, $(\sigma^1_\la,\sigma^2_\la)$ is a pair of optimal strategies in $\Gamma_{\la}$.
\end{definition}
\begin{proposition} \label{eqSP1}
Let $(\sigma^1_\la,\sigma^2_\la)_\la$ be a discounted optimal strategy profile. 
%be a family of discounted optimal strategy profiles in the game with initial state $k$. 
The following conditions are equivalent:
\begin{enumerate}
%\item
%$(\sigma_{\lambda})$ induce a constant payoff,
\item[$(i)$] \label{seqconstant}
%The condition \emph{\textbf{(SP)}($k_1$)} holds,
The family $(\sigma^1_\la,\sigma^2_\la)_\la$ satisfies the constant payoff property for all $k \in K$:
$$\limla \ga^k_\la(\sigma^1_\la,\sigma^2_\la; t)= tv^*(k) \qquad \forall t\in [0,1]\, .$$ 
%\item
%For any $k_1 \in K$, $A(\lambda):=\sup_{(t,\sigma,\tau) \in \m{R}_+ \times \Sigma_{\lambda} \times T_{\lambda}} \m{E}^{k_1}_{\sigma,\tau}(v_{\lambda}(k_{\varphi(\la,t)}))$ tends to $v^*(k_1)$
%as $\lambda$ tends to 0,
\item[$(ii)$] \label{tconstant}
For all $k \in K$, for all $t \in [0,1)$, $\m{E}^{k}_{\sigma^1_\la,\si^2_\la}[v_{\lambda}(k_{\varphi(\la,t)})]-v_{\la}(k)$ converges to 0 as $\la$ vanishes. 
\item[$(iii)$] \label{deltaconstant}
For all $k \in K$, for all $\de>0$ one has: 
\begin{equation} \label{carSP}
\limla  \E_{\si^1_\la,\si^2_\la}^{k}\left [\sum\nolimits_{m \geq 1} \de \la (1-\de \la)^{m-1}(v_{\la}(k_m)-v_\la(k))\right]=0.
\end{equation}
%$A_\la(t)$ tends to 0 as $\lambda$ tends to 0. 
\end{enumerate}
\end{proposition}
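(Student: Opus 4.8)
The plan is to reduce the three equivalences to Proposition~\ref{analyse}, applied to the family of real sequences
$$u^\la_m:=\E^k_{\sigma^1_\la,\sigma^2_\la}\!\big[v_\la(k_m)\big]-v_\la(k),\qquad m\geq1,$$
for each fixed $k\in K$. First I would check that $(u^\la_m)_m$ satisfies the regularity conditions~\eqref{condseq}. Since $|v_\la(k)|\leq\|g\|$ for all $k$, one has $|u^\la_m|\leq2\|g\|$. For the increment bound the key input is the preceding lemma: under the optimal profile $(\sigma^1_\la,\sigma^2_\la)$, at every reachable history $h^m$ the players use a saddle point of the one-shot game~\eqref{shap_game}, whose value is $v_\la(k_m)$; hence $\E^k_{\sigma^1_\la,\sigma^2_\la}\!\big[\la g_m+(1-\la)v_\la(k_{m+1})\mid h^m\big]=v_\la(k_m)$, and taking full expectations gives $\E^k_{\sigma^1_\la,\sigma^2_\la}[v_\la(k_{m+1})]=(1-\la)^{-1}\big(\E^k_{\sigma^1_\la,\sigma^2_\la}[v_\la(k_m)]-\la\,\E^k_{\sigma^1_\la,\sigma^2_\la}[g_m]\big)$. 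Consequently $|u^\la_{m+1}-u^\la_m|\leq\frac{\la}{1-\la}\,2\|g\|$, which is $\leq4\|g\|\la$ for $\la\leq\tfrac12$; for $\la>\tfrac12$ the crude bound $|u^\la_{m+1}-u^\la_m|\leq2\|g\|\leq4\|g\|\la$ suffices. So~\eqref{condseq} holds with $C=4\|g\|$.

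The second step is the Shapley-equation identity relating the cumulated payoff to $u^\la$. Multiplying the relation $\E^k_{\sigma^1_\la,\sigma^2_\la}[\la g_m+(1-\la)v_\la(k_{m+1})\mid h^m]=v_\la(k_m)$ by $(1-\la)^{m-1}$, taking expectations and summing over $m=1,\dots,M$ telescopes to
$$\E^k_{\sigma^1_\la,\sigma^2_\la}\!\Big[\sum\nolimits_{m=1}^{M}\la(1-\la)^{m-1}g_m\Big]=v_\la(k)-(1-\la)^M\,\E^k_{\sigma^1_\la,\sigma^2_\la}\!\big[v_\la(k_{M+1})\big],\qquad\forall M\geq1.$$
Taking $M=\varphi(\la,t)$ for $t\in[0,1)$ and recalling~\eqref{cumul_t}, this reads $\ga_\la(k,\sigma^1_\la,\sigma^2_\la;t)=(1-b_\la)v_\la(k)-b_\la\,u^\la_{\varphi(\la,t)+1}$ with $b_\la:=(1-\la)^{\varphi(\la,t)}$. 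By~\eqref{rel_varphi}, $b_\la\to1-t$, and since $v_\la(k)\to v^*(k)$ the term $(1-b_\la)v_\la(k)$ converges to $tv^*(k)$; using $|u^\la_{\varphi(\la,t)+1}-u^\la_{\varphi(\la,t)}|\leq C\la$, one gets $\ga_\la(k,\sigma^1_\la,\sigma^2_\la;t)-tv^*(k)=-\,b_\la\,u^\la_{\varphi(\la,t)}+o(1)$ as $\la\to0$. Since $b_\la$ is bounded and, for $t<1$, bounded away from $0$ for $\la$ small, for each fixed $t\in[0,1)$ the left-hand side vanishes if and only if $u^\la_{\varphi(\la,t)}\to0$. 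The constant payoff property at $t=1$ is automatic, since $\ga_\la(k,\sigma^1_\la,\sigma^2_\la;1)=v_\la(k)\to v^*(k)$; and $t\mapsto\ga_\la(k,\sigma^1_\la,\sigma^2_\la;t)-tv^*(k)$ is equi-Lipschitz up to an $O(\la)$ error, so pointwise and uniform convergence in $t$ coincide. Running this for every $k\in K$ yields $(i)\Leftrightarrow(ii)$.

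Finally, $(ii)\Leftrightarrow(iii)$ is exactly Proposition~\ref{analyse} applied to $(u^\la_m)_m$: its condition $(i)$ (``$u^\la_{\varphi(\la,t)}\to0$ for all $t\in[0,1)$'') is our condition $(ii)$, while its condition $(ii)$ (``$B_\la(\delta)\to0$ for all $\delta>0$'') unfolds, by linearity of the expectation, to
$$\sum\nolimits_{m\geq1}\delta\la(1-\delta\la)^{m-1}u^\la_m=\E^k_{\sigma^1_\la,\sigma^2_\la}\!\Big[\sum\nolimits_{m\geq1}\delta\la(1-\delta\la)^{m-1}\big(v_\la(k_m)-v_\la(k)\big)\Big]\longrightarrow0\quad(\la\to0),$$
which is precisely~\eqref{carSP}. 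The main obstacle is not a single hard estimate but the correct set-up: verifying that $(u^\la_m)_m$ meets the hypotheses of Proposition~\ref{analyse} — in particular the increment bound, which is exactly where the optimality of $(\sigma^1_\la,\sigma^2_\la)$ enters through the one-shot saddle-point characterization — together with keeping track of the vanishing error terms in the telescoped Shapley identity; once this is done, the three equivalences are essentially a translation.
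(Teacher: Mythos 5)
Your proof is correct and follows essentially the same route as the paper: the equivalence $(i)\Leftrightarrow(ii)$ comes from the iterated (telescoped) Shapley identity under optimal play together with $(1-\la)^{\varphi(\la,t)}\to 1-t$, and $(ii)\Leftrightarrow(iii)$ is Proposition \ref{analyse} applied to $u^\la_m=\E^k_{\si^1_\la,\si^2_\la}[v_\la(k_m)]-v_\la(k)$. Your explicit verification of \eqref{condseq} via the one-shot saddle-point characterization is just a more detailed version of the step the paper states without proof, so the two arguments coincide in substance.
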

\begin{proof}
%Obviously $3$ implies $2$. Let us prove that $3$ implies $2$. Indeed, for any $m$, there exists $t \in \m{R}_+$ such that $|m-\varphi(\la,t)| \leq $

%Define 
%\begin{equation*}
%B_\la(t):=\sup_{(t',\sigma_\la, \tau_\la) \in (0,t]\times \Sigma_\la \times T_\la} \left|\E_{\si_\la,\tau_\la}^{k_1}\left [\sum\nolimits_{m=1}^{\varphi(t',\la)}\la (1-\la)^{m-1}g_m\right] -t' v_\la(k) \right|.
%\end{equation*}
%By definition, \emph{\textbf{(SP)}($k_1$)} holds if and only if for all $t \in [0,1]$, $A_\lambda(t)$ vanishes, as $\la$ goes to 0. 
%Let $t \in [0,1]$, $t' \leq t$, $\lambda \in (0,1]$ and $(\sigma, \tau) \in \Sigma_\la \times T_\la$. 
We start by proving the equivalence between $(i)$ and $(ii)$. Fix $k \in K$ and $t\in (0,1)$.
For any $\la\in(0,1]$, Shapley's equation yields
\begin{equation*}
v_{\lambda}(k)=\E_{\si^1_\la,\si^2_\la}^{k}\left [\sum\nolimits_{m=1}^{\varphi(\la,t)-1} \la(1-\la)^{m-1} g_m \right]+(1-\la)^{\varphi(\la,t)-1} \E_{\si^1_\la,\si^2_\la}^{k}\left [v_{\lambda}(k_{\varphi(\la,t)})\right].
\end{equation*}
%where $k_m$ stands for the state at stage $m$, for any $m\in \N$. 
Consequently,
\begin{eqnarray*}
\E_{\si^1_\la,\si^2_\la}^{k}\left [\sum\nolimits_{m=1}^{\varphi(\la,t)-1} \la(1-\la)^{m-1} g_m \right]-t v_{\lambda}(k)+(1-\la)^{\varphi(\la,t)-1} \left( \E_{\si^1_\la,\si^2_\la}^{k}\left [v_{\lambda}(k_{\varphi(\la,t)})\right]-v_{\lambda}(k)\right)
\\= \left(1-t-(1-\la)^{\varphi(\la,t)-1}\right)v_{\la}(k)\, .
\end{eqnarray*}
%thus
%\begin{equation*}
%\left| A_\la(t)-(1-\la)^{\varphi(\la,t)} B(\la,k_1) \right| \leq \left|[1-t-(1-\la)^{\varphi(\la,t)}]v_{\la}(k_1) \right|. 
%\end{equation*}
The equivalence between $(i)$ and $(ii)$ is obtained by taking $\la$ to $0$, and recalling the relation $\lim_{\la \rightarrow 0} (1-\la)^{\varphi(\la,t)-1}=1-t$. \\[0.2cm]
%it follows that $(i)$ and $(ii)$ are equivalent. \\ %\ref{seqconstant} is equivalent to \ref{tconstant}. 
%\\
We now prove the equivalence between $(i)$ and $(iii)$.  %\ref{tconstant} is equivalent to \ref{deltaconstant}. 
%Let $(\sigma^1_\la,\sigma^2_\la)_\la$ be a fixed family of optimal strategy profiles. 
For each $k \in K$, $\lambda \in (0,1]$ and $m \geq 1$, define 
$$u^\la_m:=\E_{\si^1_\la,\si^2_\la}^{k}[v_\la(k_m)]-v_\la(k)\, .$$ 
Note that the family of sequences  $(u^\la_m)$ satisfies $(\ref{condseq})$ with $C=\max_{k,i,j}|g(k,i,j)|$. %\left\|g\right\|$. 
Therefore, Proposition \ref{analyse} applies, and gives the desired result. % and \ref{tconstant} is equivalent to \ref{deltaconstant}.
%It follows that $A(\la,k_1)$ converges to 0 if and only if $B(\la,k_1)$ converges to 0, and the proposition follows. 
\end{proof}
The following result is now a direct consequence of Proposition \ref{eqSP1}.
\begin{cor} \label{alternative_formulation} Theorem \ref{paiement_constant_SP} holds if and only if for all $(\sigma^1_\la,\sigma^2_\la)_\la$ discounted optimal strategy profile, for all $k \in K$ and $\de>0$, % one has:
\begin{equation} \label{carSP}
\limla  \E_{\si^1_\la,\si^2_\la}^{k}\left [\sum\nolimits_{m \geq 1} \de \la (1-\de \la)^{m-1}(v_{\la}(k_m)-v_\la(k))\right]=0 \,.
\end{equation}
\end{cor}

\subsection{Auxiliary MDP and proof of Theorem \ref{paiement_constant_SP}}
%The final step of the proof consists in defining a MDP with value 
%$$w^{\de}_\la(k_1)=\sup_{(\sigma^1,\sigma^2) \in  \Sigma^1_{\lambda} \times \Sigma^2_{\lambda}}  \E_{\si^1_\la,\si^2_\la}^{k_1}\left [\sum\nolimits_{m \geq 1} \de \la (1-\de \la)^{m-1}(v_{\la}(k_m)-v_\la(k_1))\right],$$
%such that $\limla w^\de_\la(k_1)=0$ for any $\de>0$, and apply Corollary \ref{alternative_formulation}. 
%\\
%Fix $k_1 \in K$. 

%  An important, yet easy remark, is that A strategy $\si^i_\la$ of player $i\in\{1,2\}$ is optimal in the discounted stochastic game $\Ga_\la(k_1)$ if, and only if, player $i$ plays a strategy in $X^i_\la(k_m)$ at every stage $m\geq 1$. This is true for any signaling structure that the players may have, as far as the current state is observed at every stage. \\

 %
% \textcolor{red}{MOB: On pourrait expliquer mieux...}

 % optimal mixed strategy in 
%$G(k_m)$.  %, given the current state $k_m$,
 %\end{remarque}%

Let $\delta>0$ and ${k} \in K$ be fixed. For each $\lambda \in (0,1]$, consider a Markov Decision Process (one-player stochastic game) $MDP_{{k},\delta,\la}$ with state space $K$, action set $A_\la(\ell):=X^1_{\lambda}(\ell) \times X^2_\lambda(\ell)$ for each $\ell\in K$, transition function $q$, payoff function $\ell \mapsto v_{\la}(\ell)-v_{\la}({k})$ and discount factor $\de\la$.
\begin{remarque} 
At each state, the decision-maker can only play pairs of optimal mixed strategies of the game given in \eqref{shap_game}. Hence, the sets of possible actions depend on the state and on the discount factor. Similarly, the payoff function does not depend on the actions but depends on the discount factor. \end{remarque}

\medskip

\noindent For any pair of optimal strategies $(\sigma^1,\sigma^2) \in \Sigma^1_{\lambda} \times \Sigma^2_{\lambda}$ of the original $\la$-discounted stochastic
game %Catoni
and any initial state $\ell\in K$, define 
\begin{equation} \label{discountedMDP}
h_\la(\ell,\si^1,\si^2):= \E_{\si^1,\si^2}^{\ell} %Catoni
\left[ \sum\nolimits_{m \geq 1} \de \la (1-\de \la)^{m-1}(v_{\la}(k_m)-v_\la({k}))\right]\, .
\end{equation}
 %As already noticed by Shapley \cite{shapley53}, 
 %For each initial state $k$, call 
 Let $w_\la^{\de}(\ell)$ denote the value of this MDP with initial state $\ell$, i.e.:
\begin{equation*}
w_\la^{\de}(\ell)=\sup_{(\sigma^1,\sigma^2) \in \Sigma^1_{\lambda} \times \Sigma^2_{\lambda}}  
h_\la(\ell,\si^1,\si^2)\, .
\end{equation*}
\begin{proposition} \label{propMDP}
Theorem \ref{paiement_constant_SP} holds if and only if for all ${k} \in K$ and $\delta>0$ one has $\lim_{\la \rightarrow 0} w_\la^\de({k})=0$. 
\end{proposition}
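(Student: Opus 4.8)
The plan is to show the equivalence by comparing the quantity $w_\la^\de(\bar k)$ with the expression appearing in Corollary \ref{alternative_formulation}. First I would observe that, for a \emph{fixed} initial state $\bar k$, the payoff functional $h_\la(k,\si^1,\si^2)$ of the MDP, when started at $k=\bar k$, is exactly
$$h_\la(\bar k,\si^1,\si^2)=\E_{\si^1_\la,\si^2_\la}^{\bar k}\Big[\sum\nolimits_{m\ge 1}\de\la(1-\de\la)^{m-1}(v_\la(k_m)-v_\la(\bar k))\Big],$$
so that the supremum over $(\si^1,\si^2)\in\Sigma^1_\la\times\Sigma^2_\la$ of this expression is $w_\la^\de(\bar k)$. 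The crucial point is that, by the one-shot optimality lemma stated just before Section 3.4 (the consequence of \cite[Corollary 2.6.3]{RenaultTSE}), a strategy profile $(\si^1,\si^2)$ is a profile of \emph{optimal} strategies of $\Ga_\la$ if and only if, along every history reachable with positive probability, each player plays a mixed action in $X^i_\la(k_m)$; but these are precisely the feasible actions of the MDP at state $k_m$. Hence the set of strategy profiles available in $MDP_{\bar k,\de,\la}$ is exactly the set of optimal strategy profiles $\Sigma^1_\la\times\Sigma^2_\la$ of the original game, and therefore $w_\la^\de(\bar k)$ is the supremum over all optimal profiles of the bracket in \eqref{carSP}.

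Next I would establish the two implications. For the ``only if'' direction, assume the strong payoff property holds. By Corollary \ref{alternative_formulation}, for every optimal profile $(\si^1_\la,\si^2_\la)_\la$ and every $\de>0$ the bracket in \eqref{carSP} (with base point $\bar k$ instead of $k$; note $v_\la(k_m)-v_\la(\bar k)$ differs from $v_\la(k_m)-v_\la(k)$ only by the constant $v_\la(k)-v_\la(\bar k)$, and when started at $k=\bar k$ this constant is $0$) tends to $0$. Taking the supremum is the only subtlety: I would argue that $\limla w_\la^\de(\bar k)=0$ by extracting, for each $\la$, a near-optimal profile $(\si^1_\la,\si^2_\la)$ of the MDP and noting that Corollary \ref{alternative_formulation} applies to \emph{any} such family (it is stated for all families of optimal strategies), hence $h_\la(\bar k,\si^1_\la,\si^2_\la)\to 0$; combined with $w_\la^\de(\bar k)\ge 0$ (which holds because $h_\la(\bar k,\si^1,\si^2)$ evaluated at $m=1$ contributes $\de\la(v_\la(\bar k)-v_\la(\bar k))=0$ and one can also compare with the constant-payoff profile, or simply because the supremum of quantities all tending to $0$ from a uniformly bounded and equi-controlled family is itself $o(1)$) one gets $\limla w_\la^\de(\bar k)=0$. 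For the ``if'' direction, assume $\limla w_\la^\de(\bar k)=0$ for all $\bar k$ and all $\de>0$. Fix any family of optimal strategy profiles $(\si^1_\la,\si^2_\la)_\la$ and any $k\in K$ and $\de>0$. Since $(\si^1_\la,\si^2_\la)$ is a feasible policy of $MDP_{k,\de,\la}$, one has $h_\la(k,\si^1_\la,\si^2_\la)\le w_\la^\de(k)$; applying the same bound to the profile obtained by switching the roles of the two players (equivalently, considering $-v_\la$, which amounts to the MDP where the decision-maker minimizes) gives a matching lower bound $h_\la(k,\si^1_\la,\si^2_\la)\ge -w_\la^\de(k)$ — more carefully, I would note that the bracket in \eqref{carSP} is itself the value of a feasible policy in \emph{both} the maximizing MDP and its minimizing counterpart, hence squeezed between $-w_\la^\de(k)$ and $w_\la^\de(k)$. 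Letting $\la\to 0$ yields \eqref{carSP}, and Corollary \ref{alternative_formulation} gives the strong payoff property.

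The main obstacle I expect is the careful bookkeeping of the base point: the MDP payoff is $v_\la(k)-v_\la(\bar k)$ with the running state $k$ but a \emph{fixed} $\bar k$, whereas Corollary \ref{alternative_formulation} is phrased with $v_\la(k_m)-v_\la(k)$ where $k$ is the initial state. When the MDP is started at its own base point, i.e. initial state $\bar k$, these coincide, so one must be scrupulous to only invoke $w_\la^\de(\bar k)$ — the value at initial state equal to the base point — and not at other initial states. A secondary point requiring care is the passage from the pointwise statement ``$h_\la(\bar k,\si^1_\la,\si^2_\la)\to 0$ for every family'' to ``$\sup$ over feasible policies $\to 0$'': this uses that the relevant quantities are uniformly bounded by $\|g\|$ and, via Proposition \ref{analyse}, are governed uniformly; alternatively one selects $\epsilon_\la$-optimal policies and applies Corollary \ref{alternative_formulation} directly to that selected family, which is the cleanest route and avoids any uniformity argument.
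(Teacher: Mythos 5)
Your forward direction and the passage from pointwise statements to the supremum are fine, and they match what the paper leaves implicit: $h_\la(\bar k,\cdot,\cdot)$ is exactly the bracket of \eqref{carSP} when the initial state equals the base point, the admissible policies of $MDP_{\bar k,\de,\la}$ are identified with optimal profiles of $\Ga_\la$ through the one-shot optimality lemma, and applying Corollary \ref{alternative_formulation} to a family of (near-)maximizing policies gives $\limla w_\la^\de(\bar k)=0$ under the strong property. Up to that point you are reconstructing the intended one-line proof (``this stems from Corollary \ref{alternative_formulation}'') correctly, including the base-point bookkeeping.

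The gap is in the converse, in the claimed squeeze $-w_\la^\de(k)\le h_\la(k,\si^1_\la,\si^2_\la)\le w_\la^\de(k)$. The upper bound is by definition, but the lower bound does not follow from anything you have: what is true is $h_\la(k,\si^1_\la,\si^2_\la)\ge \inf_{(\si^1,\si^2)\in\Sigma^1_\la\times\Sigma^2_\la}h_\la(k,\si^1,\si^2)$, and this infimum is \emph{not} $-w_\la^\de(k)$. Your ``switch the roles of the players / replace $v_\la$ by $-v_\la$'' argument produces the maximizing MDP of the \emph{dual} game (players exchanged, payoff $-g$, value $-v_\la$); its value is $-\inf h_\la$, and it is not covered by the hypothesis ``$\limla w_\la^\de(\bar k)=0$ for all $\bar k,\de$'', which concerns only the MDP built from $v_\la$. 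In other words, $\limla w_\la^\de=0$ only yields $\limsup_\la$ of the bracket $\le 0$ for an arbitrary optimal family; it does not by itself exclude a family along which the expected value process dips strictly below $v_\la(k)$ (bracket with negative limit), since the maximizing MDP only penalizes upward excursions of $v_\la(k_m)$. To close the converse you must control $\inf h_\la$ separately, e.g.\ by stating and proving the symmetric claim $\limla \inf_{(\si^1,\si^2)}h_\la(\bar k,\si^1,\si^2)=0$ (equivalently, run the dual-game version of the MDP, or observe that the semi-algebraic contradiction argument of Part 6 applies verbatim with the constraint $h(\bar k)\le-\epsilon$ in place of $h(\bar k)\ge\epsilon$, again contradicting Theorem \ref{paiement_constant_WP} via Proposition \ref{eqSP1}). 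As written, the inequality $\inf h_\la\ge-\sup h_\la$ is asserted but not proved, and it is precisely the half of the equivalence that the rest of the paper's argument needs.
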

\begin{proof}
%The first statement is the definition of the the value of the Markov Decision Process. 
This stems from Corollary \ref{alternative_formulation}. 
\end{proof}
\paragraph{End of the proof of Theorem \ref{paiement_constant_SP}.}
Let $\delta>0$ and ${k} \in K$ be fixed. 
%Let $\bar{k} \in K$ and $\delta>0$. 
By Proposition \ref{propMDP}, it is enough to prove that $\lim_{\la \rightarrow 0} w_\la^\de({k})=0$. Note that, by Theorem \ref{paiement_constant_WP} and Proposition \ref{eqSP1} $(iii)$, % \ref{deltaconstant}, 
we have $\liminf_{\la \rightarrow 0} w_\la^\de({k}) \geq 0$. Thus, it is enough to prove that $\limsup_{\la \rightarrow 0} w_\la^\de({k})=0$. By contradiction, assume that $\limsup_{\la \rightarrow 0} w_\la^\de({k})>\ep$ for some $\ep>0$. \\[0.2cm]
%\noindent 
We resort to the semi-algebraic approach. Note that, unlike the classical setup described in Section \ref{semi_alg}, where a two-player zero-sum stochastic game $\Gamma$ was fixed and the discount factor $\la$ was put to 0, here we have a Markov Decision Process $MDP_{{k},\delta,\la}$ (thus, one player only) which depends on $\la$ through its action set and payoff function. Nonetheless, the semi-algebraic approach still applies. %.\\[0.2cm] %, and yields the following result: 
%\begin{lem}  There exists an optimal Puiseux strategy %$\la_0 \in (0,1]$ such that 
%%(each coordinate of) 
%$\la\mapsto z^\de_\la$ in the $\de\la$-discounted $MDP_{\bar{k},\delta,\la}$. 
%\end{lem}
%\begin{proof}
%By construction, the values $(w^\de_{\la}(k))_{k\in K}$ satisfy the following dynamic programming principle:
%%The value $w_{\la,\delta}$ satisfies the following dynamic programming principle:
%\begin{equation}\label{shapley_MDP}
%w^\de_{\la}(k)=\max_{(a^1,a^2) \in X^1_\la(k) \times X^2_\la(k)} \left\{ \de \la (v_\la(k)-v_\la(\bar{k}))+ (1-\de \la)  \m{E}^{k}_{a^1,a^2}[w^\de_{\la}(\,\cdot\,)] \right\}
%\end{equation}
%This equation can be written as a finite number of polynomial inequalities and equalities, so that they define a semi-algebraic set. Explicitly, let $x^1_\la$ and $x^2_\la$ be such that $x^1_\la(k)$ and $x^2_\la(k)$ are maximizers for each $k$ in the previous equation. %\eqref{shapley_MDP}.
 %denote by $Z$ the set of $(\la, (w_\la(k))_k,(v_\la(k))_k,(x_\la(k,i))_{k,i},(y_\la(k,j))_{k,j})$ in $\R \times \R^K \times \R^K\times \R^{K\times I}\times \R^{K\times J}$ such that 
%\begin{proposition} The payoff is constant for any pair of optimal strategies.
%\end{proposition}
%Denote the value of the auxiliary game by $w_\la(k)$. %\end{proof}
%Taking $(\la, w_\la,v_\la,x_\la,y_\la)$ as variables in $\R^n$\footnote{Or, more precisely,  
Define a subset $S_\ep\subset \R \times \R^K\times \R^{K\times I}\times \R^{K\times J} \times \R^K$ by setting
$$(\la, v, x^1,x^2,h)\in S_\ep \quad \text{if, and only if, the following relations hold:} $$%\Longleftrightarrow $$ 
%if and only:
\begin{itemize}
\item $\la\in \R$ is some discount factor, that is $0<\la\leq 1$.
\item $v\in \R^K$ is the vector of values of the $\la$-discounted stochastic game $\Ga_\la$.
\item $(x^1, x^2)\in \R^{K\times I}\times \R^{K\times J}$ is a pair of optimal stationary strategies in $\Gamma_\la$.
\item $h\in \R^K$ satisfies $h(\ell)=h_\la(\ell,x^1,x^2)$ for all $\ell\in K$ and $h({k})\geq \ep$. %, where $h_\la$  
%is defined by equation (\ref{discountedMDP}). % and satisfies $h_\la(\bar{k},x^1_\la, x_\la^2) \geq \ep$. 
\end{itemize}
The set $S_\ep$ is semi-algebraic, as it can be described by the following finite set of polynomial equalities and inequalities (compare with the system in Section \ref{semi_alg}): \begin{eqnarray*}
0< \la &\leq & 1\\
\forall (\ell,i), \ x^1(\ell,i)\geq 0,\ \text{and } \ \forall \ell, \quad \sum\nolimits_{i\in I} x^1(\ell,i)&=&1\\
\forall (\ell,j), \ x^2(\ell,j)\geq 0,\ \text{and } \ \forall \ell, \quad \sum\nolimits_{j\in J} x^2(\ell,j)&=&1\\
\forall (\ell,j),\quad \sum\nolimits_{i\in I} x^1(\ell,i)\left(\la g(\ell,i,j)+(1-\la)\sum\nolimits_{\ell'\in K}q(\ell'|\ell,i,j)v(\ell')\right) & \geq & v(\ell)\\
\forall (\ell,i),\quad \sum\nolimits_{j\in J} x^2(\ell,j)\left(\la g(\ell,i,j)+(1-\la)\sum\nolimits_{\ell'\in K}q(\ell'|\ell,i,j)v(\ell')\right) & \leq & v(\ell)\\
\forall \ell,\quad \de \la (v_\la(\ell)-v_\la({k})) +(1- \de \la)\sum\nolimits_{(i,j)\in I\times J}x^1(\ell,i)x^2(\ell,j)\sum\nolimits_{\ell'\in K}q(\ell'|\ell,i,j)h(\ell')& = & h(\ell)
\\
h({k}) &\geq &\ep\, .
\end{eqnarray*}

For $\lambda \in (0,1]$, let %Catoni
$S_{\ep}(\lambda)=\left\{a \ | \ (\lambda,a) \in S_{\ep}\right\}$. 
By assumption, $\limsup_{\la \rightarrow 0} w_\la^\de({k})>\ep$, thus there exists a vanishing subsequence $(\lambda_n)$ such that for all $n$, the set $S_\ep(\la_n)$
 %following set 
 is non-empty. 
%$E^n_\la=\{(v,x^1,x^2,h)\,|\, (\la_n, v,^1,x^2,h)\in E_\ep\}$
%$E^n_\ep(\la_n,\, \cdot\,)$ is nonempty. 
By semi-algebraicity, there exists $\lambda_0 \in (0,1]$ so that $S_\ep(\la)$ is non-empty  for all $\la \in (0,\lambda_0)$. %, $E_\ep^n (\la_n,\, \cdot\,)$ is nonempty. 
From the Tarski-Seidenberg elimination theorem, it admits a semi-algebraic selection for $\la\in (0,\la_0)$. 
In particular, there exists a selection of stationary strategies $z_\la:=(x^1_\la,x^2_\la)$ that is a strategy of the Markov decision process $MDP_{{k},\delta,\la}$, which can be expressed as a Puiseux series near $0$, and so that $h_\la({k},x^1_\la,x^2_\la) \geq \ep$ for all $\la$ small enough. But this contradicts Theorem \ref{paiement_constant_WP} and Proposition \ref{eqSP1} $(iii)$ since, together, they imply that $\lim_{\la \rightarrow 0} h_\la({k},x^1_\la,x^2_\la) =0$. \hfill $\square$

\section{Examples and a remark}\label{section_annexe}
%We now provide some final remarks, mainly through examples. %, and extend our main result in two directions. %: to stochastic games with imperfect monitoring, and to .
\subsection{An example}
Let us illustrate the constant payoff property by an example, studied by Bewley and Kohlberg \cite{BK78}. 
The state space is the set $K=\{1^*,k,\ell,0^*\}$. For all $(i,j)\in I\times J$, one has $q(1^*\,|\,1^*,i,j)=q(0^*\,|\,0^*,i,j)=1$, 
$g(1^*,i,j)=1$ and $g(0^*,i,j)=0$ so that the states $1^*$ and $0^*$ are absorbing with payoff $1$ and $0$ respectively.
%, and .  
%\de_{a^*}$ and $g(a^*,i,j)=a$ for all 
 The transition from states $k$ and $\ell$ are deterministic and represented by the two following matrices: 
\begin{center}
\begin{tikzpicture}[xscale=1, yscale=1]
% \node [left] at (-0.4,2.05) {$\Ga$};
\draw[fill=black!01] (0,0) rectangle (1.6,1.6);
\draw[thin] (0,0.8)-- (1.6,0.8);
\draw[thin] (0.8,0)-- (.8,1.6);
 \node[scale=0.9] at (0.2+0.2,1.2) {$k$};
 \node[scale=0.9] at (0.2+0.2+0.4,-0.5) {$k$};
 \node[scale=0.9] at (0.2+0.2+0.4+4,-0.5) {$\ell$};
  \node[scale=0.9] at (1+0.2,1.2) {$\ell$};
  \node[scale=0.9] at (0.2+0.2,0.4) {$\ell$};
  \node[scale=0.9] at (1+0.2,0.4) {$1^*$};
 \node [above, scale=0.9] at (0.4,1.6) {L};
 \node [above, scale=0.9] at (1.2,1.6) {R};
 \node [left, scale=0.9] at (0,1.2) {T};
  \node [left, scale=0.9] at (0,0.4) {B};
\draw[fill=black!01] (3+1,0) rectangle (4.6+1,1.6);
\draw[thin] (3+1,0.8)-- (4.6+1,0.8);
\draw[thin] (3.8+1,0)-- (3.8+1,1.6);
% \node at (-1,1) {state $k$};
% \node at (6.5,1) {$\ell$};
%  \node[scale=0.9] at (3.2+1+0.2,1.2) {\textcolor{black}{$-a'$}};
  \node[scale=0.9] at (4+1+0.2,1.2) {\textcolor{black}{$k$}};
  \node[scale=0.9] at (3.2+1+0.2,0.4) {\textcolor{black}{$k$}};
    \node[scale=0.9] at (3.2+1+0.2,1.2) {$\ell$};
  \node[scale=0.9] at (4+1+0.2,0.4) {\textcolor{black}{$0^*$}};
 \node [above, scale=0.9] at (3+1.4,1.6) { \textcolor{black}{L}};
 \node [above, scale=0.9] at (4.2+1,1.6) {\textcolor{black}{R}};
 \node [left, scale=0.9] at (3+1,1.2) {\textcolor{black}{T}};
  \node [left, scale=0.9] at (3+1,0.4) {\textcolor{black}{B}};
%  \node at (-1.4,0.8) {$q(\,\cdot\,| k,i,j)=$};
%    \node [scale=0.9] at (2.3,-0.5) {\emph{Two fully equivalent games}};
\end{tikzpicture}
\end{center}
The set of actions are $I=\{T,B\}$ and $J=\{L,R\}$. Finally, the payoff function is given by 
$$ \forall (i,j)\in I \times J, \quad  g(k,i,j)=1\quad \text{ and }\quad g(\ell,i,j)=0\, .$$
%It is a stochastic game with 4 states, two of which are absorbing with payoff $1$ and $-1$. 
Optimal stationary strategies satisfy $x^1(T)=x^2(L)\to_{\la\to 0} 1$ and $x^1_\la(B)=x^2_\la(R)\sim_{\la\to 0} \sqrt \la$, so that  %((1,\sqrt \la),(1,\sqrt \la))$
the induced Markov chain satisfies
\small{$$Q_\la\sim_{\la\to 0}\begin{pmatrix}
1 & 0 & 0 & 0\\
\la & 1& 2\sqrt \la & 0  \\
0  & 2\sqrt \la &1 & \la  \\
0 & 0 & 0 & 1 
%0 & 0 & 0 & 1
\end{pmatrix}.$$}
The limit payoff vector is given by $g^*=(1,1,0,0)\in \R^K$ and, by the symmetry of the game, the  vector of limit values is $v^*=(1,1/2,1/2,0)\in \R^K$.
% For any vector $w\in\R^K$ let $w^T$ denote its transpose. 
Let $t\in (0,1)$. 
A direct calculation yields that $Q_\la^{\varphi(\la,t)}$ converges to some $\pi_t$, and $\sum_{m=1}^{\varphi(\la,t)} \lambda(1-\lambda)^{m-1}Q^{m-1}_\la$ converges to some $\Pi_t$, such that
$$\Pi_t=  \begin{pmatrix} t&  0& 0& 0 \\ \frac{t^2}{4} & \frac{2t-t^2}{4} & \frac{2-t^2}{4} & \frac{t^2}{4}\\ 
\rule{0pt}{13pt} \frac{t^2}{4} & \frac{2t-t^2}{4} & \frac{2t-t^2}{4} & \frac{t^2}{4}\\  \rule{0pt}{11pt} 0&  0& 0& t\end{pmatrix},\quad 
\pi_t= \begin{pmatrix} 1&  0& 0& 0 \\ \frac{t}{2} & \frac{1-t}{2} & \frac{1-t}{2} & \frac{t}{2}\\ 
 \rule{0pt}{11pt} \frac{t}{2} & \frac{1-t}{2} & \frac{1-t}{2} & \frac{t}{2}\\  0&  0& 0& 1\end{pmatrix}\,.$$
 In particular,  
$$\Pi= \Pi_1= %:\int_0^1  \frac{\partial}{\partial t}\Pi_t  dt=\Phi (\Id - A)^{-1} M= 
 \begin{pmatrix} 1&  0& 0& 0 \\ \frac{1}{4} & \frac 1 4 & \frac 1 4 & \frac{1}{4} \\ \rule{0pt}{11pt} \frac{1}{4} & \frac 1 4 & \frac 1 4 & \frac{1}{4} \\ \rule{0pt}{10pt} 0&  0& 0& 1\end{pmatrix}.$$
%In this example one has $g^*=(1,1,-1,-1)$ and $v^*=(1,0,0,-1)$, so that 
One can thus easily check the equality $\Pi v^*=v^*$, and that $\Pi_t g^*=t v^*$ and $\pi_t v^*=v^*$ hold for all $t\in(0,1)$. %, and . % and $ \frac{\partial^2}{\partial t^2}\Pi_t v^*=0$. 
%One can easily check the relations proved in the paper such as $\Pi v^*=v^*$, $A\widetilde{v}^*=0$, $\pi_t v^*=v^*$ for all $t\in[0,1]$, etc. (see Corollaries \ref{key}, \ref{keybis} and \ref{mg_property}, Lemma \ref{techh} and Proposition \ref{lim_4}). 
%$$A\widetilde{v}^*=0,\quad v^*=\pi_t g^*$, one obtains $v^*= \begin{pmatrix} 1 & 0 & 0 & -1\end{pmatrix}$. 
%$(\Id - A)^{-1} =\begin{pmatrix} 1&  0& 0 \\ \frac{1}{4} & \frac 1 2 & \frac 1 4\\ 0& 0& 1\end{pmatrix}$

\subsection{The constant payoff is a joint property}
Contrary to non-zero sum games, where the notion of Nash equilibrium is a joint property of the players' strategies, %yand local property, 
the notion of optimality is unilateral in zero-sum games. Indeed, by playing an optimal strategy, Player $1$ ensures that his payoff is greater than
or equal to %Catoni
 the value regardless of the strategy used by his opponent (and similarly for Player 2). Naturally, one would like to know whether the constant payoff property is an unilateral property as well. That is, can Player 1 ensure that the average payoff at time $t$ is greater than or equal to %Catoni
 the value at all times $t\in [0,1]$? \\ % during the game, regarless of what his opponent plays.\\

\noindent  The following example gives a negative answer to this question: playing an optimal strategy does not ensure that the average payoffs are greater 
than or equal to %Catoni
the value at all times. Rather, the constant payoff property requires \emph{both players} to play optimally. %In this sense, it is a \emph{joint property}. %\begin{exemple}
The example is ``as bad as it can be'', since the unique optimal strategy of Player $1$ guarantees strictly less than the value at any time $t\in(0,1)$, where the fact that this property holds for $t=1$ follows from the optimality of his strategy.  
\\ % e'' since, by definition, an optimal strategy ensures that the average payoff is greater or equal than the value for $t=1$.\\ 

Consider the ``Big Match'', introduced by Gillette \cite{gillette57},  %,  and solved by Blackwell and Ferguson \cite{BG68}. 
a stochastic game with set of states $K=\{k,0^*,1^*\}$, action sets $I=\{T,B\}$ and $J=\{L,R\}$, and where states $0^*$ and $1^*$ are absorbing with payoff $0$ and $1$ respectively, i.e. for all $(i,j)\in I\times J$,
 %Only $k_0$ is non-absorbing in the sense that
$$q(\,\cdot\,| 0^*,i,j)=\de_{0^*},\quad  q(\,\cdot\,| 1^*,i,j)=\de_{1^*},\quad  g(0^*,i,j)=0\quad \text{ and } \quad   g(1^*,i,j)=1.$$
% The above representation is convenient but must be explained. First of all, the action sets are and the initial state is $k$. 
The game with initial state $k$, the non-absorbing state, can be represented as follows: 
 \begin{center}
\begin{tikzpicture}[xscale=0.7,yscale=0.7]
\draw[fill=black!03]  (0,0) rectangle (2,2);
\draw[thin] (0,1)-- (2,1);
\draw[thin] (1,0)-- (1,2);
 \node at (0.5,0.5) {$0^*$}; %Catoni
 \node at (0.5,1.5) {$1^*$};
  \node at (1.5,0.5) {$1^*$}; %Catoni
  \node at (1.5,1.5) {$0^*$};
 \node [above] at (0.5,2) {L};
 \node [above] at (1.5,2) {R};
 \node [left] at (0,0.5) {B};
 \node [left] at (0,1.5) {T};
%   \node [right,scale=1] at (2.5, 0.5){ $x\in \RR$};
 \end{tikzpicture}
\end{center}
As far as Player $1$ plays action $B$, he receives stage payoffs $0$ or $1$, depending on whether Player $2$ plays $L$ or $R$, and the state does not change, i.e. 
$$g(k,B,L)=0,\quad  g(k,B,R)=1\quad \text{ and }\quad q(k \,|\, k,B,L)=q(k\,|\, k,B,R)=1.$$ When Player $1$ plays $T$, the state moves to an absorbing state, %. but not only for one stage, \emph{forever}. This is
 indicated by a $*$ in the picture above, depending on the action of his opponent. %, and. %Formally, there exists two absorbing states, with payoffs $0^*$ and $1^*$ independently of players's actions, which are reached with probability one when $(T,L)$ and $(T,R)$ are played, respectively. 
For all $\la\in(0,1]$, the value and the unique optimal stationary strategy profile are given by%, \frac{1}{1+\la}\right)
$$v_\la(k)=\frac 1 2,\quad \quad x^1_\la(k,T)= \frac{\la}{1+\la}\quad \text{ and }\quad x^2_\la(k, L)=\frac 1 2.$$ %\left(\frac 1 2, \frac1 2\right)$$
%Moreover, player $1$ has a unique optimal strategy, which is stationary. It satisfies %. It is described by % $x_\la$ that satisfy 
%$x_\la(k,T)=\frac{\la}{1+\la}$, for all $\la$. %and $y_\la(k,L)=\frac 1 2$. 
Let $x^2$ be the strategy that plays $L$ at every stage. Though not optimal, $x^2$ is a best reply to $x^1_\la$ since 
$\ga_\la(k,x^1_\la,x^2)=v_\la(k)$ for all $\la$. Computations show that % Indeed, let $\tau=\inf\{m\geq 1,\ k_m\notin k\}$ be the stage at which $k$ is left. % (or equivalently, when player $1$ plays $T$ for the first time).
%Denoting by $x^2$ the strategy which plays $L$ at every stage, o
%We have $\PP^{k}_{x^1_\la,x^2}(\tau\geq m)=(1/(1+\la))^m$ so that, for all $t\in[0,1]$, $$\limla \PP^{k}_{x^1_\la,x^2}(\tau \geq \varphi(\la,t))=1-t\,.$$
%It follows that $\{k\}$ is a payoff-relevant cycle with exit height $1$ and exit rate $1$. As only state $1^*$ can be reached, the position at time $t\in[0,1]$ induced by $(x^1_\la,x^2)$ is given by $\Pi'_t(k,\,\cdot\,)=(1-t, 0,t)$. Similarly, the limit payoff vector is $g^*=(0,0,1)$. Hence, %By Lemma \ref{payoff_lim}, for all $t\in[0,1]$ one has:
% %  Thus, $n(\la, \tau)$ converges in law to the uniform disribution in $[0,1]$. Hence: 
%%$$\limla \ga_\la(k,x_\la,y^L;t)= \int_{0}^t (-s + (t-s)) ds +\int_{t}^1 -t ds= -t(1-t),\ \forall t\in [0,1]$$
$$\limla \ga_\la(k,x^1_\la,x^2;t)=\frac{t^2}{2}\,.$$
%,\ \forall t\in [0,1]$$
%In particular, for $t\in(0,1)$ one has $\limla \ga_\la(k,x_\la,y^L;t)=-t(1-t)<t v^*(k)=0$.\\
Since $\frac{t^2}{2}< t v^*(k)$ for all $t\in(0,1)$, $(x^1_\la,x^2)$ does not satisfy the constant payoff property. In fact, under these strategies, Player $2$ obtains strictly less than the value (and this is favorable to him) at all times except for $t=1$. %, which is ensured by the optimality of $x^1_\la$. 

\section*{Acknowledgments}
We are greatly indebted to Sylvain Sorin, whose comments have led to significant improvements in the presentation of the paper. We are also very thankful to Abraham Neyman for his careful reading and numerous remarks, and also to Cyril Labbé, Rida Laraki, Eran Shmaya and Guillaume Vigeral for helpful discussions.

\bibliographystyle{amsplain}
\bibliography{bibliothese2}

\end{document}